\numberwithin{equation}{section}                                      
\def\author#1{\gdef\@author{\MakeUppercase{#1}}}
\let\case@thanks\thanks
\def\thanks#1{\NoCaseChange{\case@thanks{#1}}}
\renewcommand{\maketitle}{
  \begin{center}
    \vspace*{2em}
    {\bfseries\MakeUppercase \@title \par}
    \vspace{2em}
    {\@author \par}
    \vspace{0.5em}
    {\small \@date \par}
  \end{center}
  \@thanks
}
\titleformat{\subsection}[runin]
  {\normalfont\normalsize}
  {\thesubsection}{0.5em}{\bfseries}
  [.]
\titlespacing*{\section}{0pt}{4.8ex plus 1.2ex minus 0.8ex}{1.8ex plus 0.4ex minus 0.4ex}
\titlespacing*{\title}{40pt}{8em}{1.8ex plus 0.4ex minus 0.4ex}
\renewenvironment{abstract}{
    \small
    \quotation
    \noindent
    \textsc{\abstractname.}\ \ignorespaces
}{
  \par\vspace{2em}
}
\newtheoremstyle{theorem}{6pt}{6pt}{\itshape}{}{\bfseries}{.}{0.5em}{}
\newtheoremstyle{note}{6pt}{6pt}{\upshape}{}{\bfseries}{.}{0.5em}{}
\theoremstyle{theorem}
\newtheorem{proposition}{Proposition}[section]
\newtheorem{theorem}[proposition]{Theorem}
\newtheorem{lemma}[proposition]{Lemma}
\newtheorem{corollary}[proposition]{Corollary}
\theoremstyle{note}
\newtheorem{definition}[proposition]{Definition}
\newlist{myclaims}{enumerate}{3}
\setlist[myclaims,1]{label=\textbf{Claim \arabic*.}, align=left, leftmargin=2em, labelindent=0em, listparindent=0em, labelsep=1em, labelwidth=0em, ref= \arabic*}
\setlist[myclaims,2]{label=\textbf{Claim~\arabic{myclaimsi}.\arabic*.}, align=left, leftmargin=2em, labelindent=0em, listparindent=0em, labelsep=1em, labelwidth=0em, ref=\arabic{myclaimsi}.\arabic*}
\setlist[myclaims,3]{label=\textbf{Claim~\arabic{myclaimsi}.\arabic{myclaimsii}.\arabic*.}, align=left, leftmargin=2em, labelindent=0em, listparindent=0em, labelsep=1em, labelwidth=0em, ref=\arabic{myclaimsi}.\arabic{myclaimsii}.\arabic*}
\newlist{mycases}{enumerate}{3}
\setlist[mycases,1]{label=\textsc{Case \arabic*.}, align=left, leftmargin=2em, labelindent=0em, listparindent=0em, labelsep=1em, labelwidth=0em, ref=\arabic*}
\setlist[mycases,2]{label=\textsc{Case~\arabic{mycasesi}.\arabic*.}, align=left, leftmargin=2em, labelindent=0em, listparindent=0em, labelsep=1em, labelwidth=0em, ref=\arabic{mycasesi}.\arabic*}
\setlist[mycases,3]{label=\textsc{Case~\arabic{mycasesi}.\arabic{mycasesii}.\arabic*.}, align=left, leftmargin=2em, labelindent=0em, listparindent=0em, labelsep=1em, labelwidth=0em, ref=\arabic{mycasesi}.\arabic{mycasesii}.\arabic*}
\newlist{mylist}{enumerate}{3}
\setlist[mylist]{label*=\Roman*, align=left, leftmargin=2em, labelindent=0em, listparindent=0em, labelsep=1em, labelwidth=0em}
\begin{document}

\author{Anita Behme\thanks{TU Dresden,
  Institut f\"ur Mathematische Stochastik \& Center for Scalable Data Analytics and Artificial Intelligence (ScaDS.AI), 01062 Dresden, Germany, e-mail: anita.behme@tu-dresden.de}, Markus Riedle\thanks{King's College London, London Probability at King's, Strand, WC2R 2LS London, email: markus.riedle@kcl.ac.uk}, and Shend Thaqi\thanks{TU Dresden,
Institut f\"ur Mathematische Stochastik, 01062 Dresden, Germany, e-mail: shend.thaqi@tu-dresden.de}}

\title{Regularity of multiplicative processes on infinite-dimensional Lie groups}
\maketitle
\begin{abstract}
  This article studies regularity properties of multiplicative stochastic processes on infinite-dimensional Lie groups. We investigate conditions under which these processes admit c\`adl\`ag modifications and derive bounds on their local behavior. Our approach builds on the local equivalence of Banach-Lie groups and Banach spaces via the exponential and logarithm, allowing us to transfer analytic estimates and structural results.   
\end{abstract}

\noindent
{\em AMS 2020 Subject Classifications:\enspace 60B20, 60G07, 60J57, 60J25, 60J76}

\noindent
{\em Keywords:\enspace Stochastic processes, Infinite-dimensional Lie groups, Stochastic differential geometry, Stochastic exponential, Stochastic logarithm, Path properties}

\section{Introduction}	
Infinite-dimensional analogues of Lie groups naturally emerge in many areas of mathematics and physics, including groups of invertible operators, gauge groups, loop groups, and diffeomorphism groups of manifolds. Such groups serve as fundamental symmetry structures in mathematical physics, particularly in gauge theory and quantum field theory~\cite{cheng2000gauge}, and in hydrodynamics, where they describe configuration spaces of fluid flows~\cite{abraham2008foundations}. More generally, they provide a natural framework for modelling dynamical systems with infinite-dimensional symmetries. A key step toward understanding such systems is to incorporate random perturbations, leading to the study of stochastic processes taking values in infinite-dimensional Lie groups.

In this article, we investigate the regularity properties of Banach-Lie group-valued stochastic processes with independent increments. Such processes arise, for example, as solutions to stochastic differential equations driven by vector space-valued noise~\cite{estrade1992exponentielle, hakim2006exponentielle}, or as natural models of random evolution in systems subject to symmetry constraints~\cite{skorokhod1982operator}. Our focus is on establishing conditions under which these processes admit regular modifications, that is, versions with c\`adl\`ag or continuous sample paths, and on deriving quantitative bounds for their expected displacement. 

While these questions are classical in the setting of normed vector spaces, the transition to Banach-Lie groups introduces two major challenges: the absence of a linear structure and the non-commutativity of the group operation. Consequently, fundamental notions such as “distance from the origin” or “increment size” require careful reinterpretation via invariant metrics and local coordinates. We address these difficulties by analyzing the local behaviour of multiplicative processes, exploiting the exponential and logarithm maps to transfer results from the linear to the non-linear setting.

The c\`adl\`ag regularity of stochastic processes on Lie groups has been well established in the finite-dimensional setting. Hunt’s seminal work on convolution semigroups on locally compact groups, and later developments by Heyer and Liao~\cite{hunt1956semigroups, heyer1977probability, liao2004levy}, show that Lévy processes on such groups are Hunt processes and therefore admit c\`adl\`ag modifications. In the broader context of manifold-valued semimartingales, the general theory of \'Emery~\cite{emery1989stochastic} ensures the existence of c\`adl\`ag sample paths for jump semimartingales in local charts, while Kunita’s stochastic flow theory~\cite{kunita1990stochastic} extends this to solutions of stochastic differential equations on smooth manifolds. For infinite-dimensional Lie groups, c\`adl\`ag regularity has mainly been studied through Lévy-driven stochastic flows on diffeomorphism groups, notably in the work of Fujiwara and Kunita~\cite{fujiwara1985stochastic} and subsequent extensions by Cruzeiro and Malliavin~\cite{cruzeiro1996renormalized}. However, these analyses rely on semimartingale or diffusion assumptions and on the geometric structure of diffeomorphism groups.

In contrast, Banach-Lie group-valued processes with independent increments have received far less attention. To the best of our knowledge, no general framework has been developed that establishes c\`adl\`ag regularity directly from the algebraic and analytic properties of Banach-Lie groups themselves. The present work addresses this gap by deriving regularity results for multiplicative processes using local exponential-logarithm representations, thereby extending the finite-dimensional L\'evy-type regularity theory to the Banach-Lie setting.

This perspective highlights the fact that, locally, Banach-Lie groups resemble Banach spaces, allowing a systematic extension of vector-space techniques to the realm of smooth manifolds and infinite-dimensional Lie groups.

Our main results include general regularity theorems for multiplicative processes on Banach-Lie groups. In Section~\ref{Preliminaries}, we state the definition of a Banach-Lie group, properties that are needed for our study, and we define multiplicative processes. In Section~\ref{Regularity}, the existence of a c\`adl\`ag modification is shown by studying the number of oscillations of a multiplicative process. We further prove in Section~\ref{Results for multiplicative processes with bounded jumps} that the moment generating function of \( d(0,x^{ s } _t) \) exists where \( d \) is a left- or right-invariant metric and \( (x^{ s } _t)_{ s \leq t }  \) is a multiplicative process with bounded jumps. We demonstrate in Section~\ref{Multiplicative processes on the Heisenberg group} how multiplicative processes can be constructed on infinite-dimensional nilpotent Lie algebras.

\section{Preliminaries}
\label{Preliminaries}

We study stochastic processes on infinite-dimensional Lie groups. Since we are interested in convergence, existence, and uniqueness, the Lie groups under consideration are assumed to be locally diffeomorphic to Banach spaces, i.e., Banach-Lie groups. Most of the Banach-Lie theoretic results we use are mentioned in~\cite{neeb2015towards} and the references therein. In finite dimensions, a Lie group is usually defined as a smooth manifold in a Euclidean space with group structure. This notion can be generalized by considering smooth manifolds in more abstract spaces, based on a compatible notion of differentiability~\cite{lang1999fundamentals}. 

In Banach spaces, the two most important notions are G\^{a}teaux differentiability and Fr\'echet differentiability, but only the latter characterizes the derivative as a bounded linear operator, which is crucial~\cite{zorn1945characterization}. Hence, whenever we refer to a \emph{differentiable} mapping, we mean a Fr\'echet differentiable mapping, and a \emph{smooth} mapping is understood as one that is arbitrarily often Fr\'echet differentiable. Based on this, we use Definition I.3.1 of a \emph{Banach manifold} given in~\cite{neeb2015towards}.
\begin{definition}
  We say \( \mathscr{G}  \) is a \emph{Banach-Lie group} if it is a Banach manifold equipped with a multiplication map~\( m : \mathscr{G}  \times \mathscr{G}  \to \mathscr{G}  , \, (g,h) \mapsto gh \) which is smooth, an \emph{identity (element)}~\( e \in \mathscr{G}  \) for which \( eg= ge = g \) for all~\( g \in \mathscr{G}  \), and a smooth inversion map~\( i: \mathscr{G} \to \mathscr{G} , \, g \mapsto g^{ -1 }   \), where \( g^{ -1 }  \) satisfies~\( g^{ -1 } g = g g^{ -1 } = e \). 
\end{definition}
A \emph{Banach-Lie algebra} is a Banach space~\( \mathfrak{g}  \) equipped with a \emph{Lie bracket}, i.e., a map~\( [ \,\cdot\, , \,\cdot\, ]: \mathfrak{g} \times \mathfrak{g} \to \mathfrak{g}   \) satisfying
\begin{enumerate}[1)]
  \item \( [ \,\cdot\, , \,\cdot\, ] \) is bilinear and continuous,
  \item for all~\( G,H \in \mathfrak{g}  \), we have \( [G,H] = -[H,G] \),
  \item for all~\( F,G,H \in \mathfrak{g} \), the \emph{Jacobi identity} holds, i.e.,
    \begin{equation*}
      [F,[G,H] ] + [G,[H,F]] + [H,[F,G] ] = 0.
    \end{equation*}
\end{enumerate}
In finite dimensions, there is a correspondence between Lie groups and Lie algebras (see~\cite{varadarajan2013lie}): For each finite-dimensional Lie group, its tangent space at the identity defines a Lie algebra, and conversely, every finite-dimensional Lie algebra arises as the tangent space at the identity of a Lie group. In infinite dimensions, the latter statement fails: There exist Banach-Lie algebras that are not associated with any Banach-Lie group~\cite{est1964non}. However, starting with a Banach-Lie group, one can always construct its associated Lie algebra. The following result collects this and other important properties of Banach-Lie groups, as presented in Chapter II of~\cite{neeb2015towards}.
\begin{proposition}
  Suppose \( \mathscr{G}  \) is a Banach-Lie group. Then the tangent space~\( \mathfrak{g} := T_e \mathscr{G}\) at the identity, is a Banach-Lie algebra. Furthermore, there exists a smooth map denoted~\( \operatorname{exp} : \mathfrak{g} \to \mathscr{G}  \) called \emph{exponential} such that \( \exp(0 ) = e \). Moreover, there are open sets~\( U_{ \mathfrak{g} }  \subseteq  \mathfrak{g}, \, U_{ \mathscr{G}} \subseteq \mathscr{G}   \) with \( 0 \in U_{ \mathfrak{g} } , \, e \in U_{ \mathscr{G}  }  \), such that \( \exp\vert_{U_{ \mathfrak{g} } } :\, U_{ \mathfrak{g} } \to U_{ \mathscr{G}  } \) is diffeomorphic.
\end{proposition}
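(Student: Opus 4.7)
The plan is to construct the Banach-Lie algebra structure on $T_e\mathcal{G}$ via left-invariant vector fields, and then obtain the exponential as the time-$1$-map of the corresponding flow; smoothness and the local diffeomorphism property will then follow from Banach-space ODE theory together with the inverse function theorem. Throughout, I would work in a fixed smooth chart $\varphi: U_{\mathcal{G}} \to V \subseteq \mathfrak{g}$ around $e$ with $\varphi(e) = 0$, so that all local computations become Fr\'echet-differential calculus in a Banach space.

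First I would establish the algebraic framework. For each $g \in \mathcal{G}$ the left translation $L_g: \mathcal{G} \to \mathcal{G}$, $h \mapsto gh$, is smooth (as a composition of $m$ with a constant map) and has smooth inverse $L_{g^{-1}}$; consequently $(dL_g)_e: T_e\mathcal{G} \to T_g\mathcal{G}$ is a Banach-space isomorphism. For $G \in \mathfrak{g}$ I define the left-invariant vector field $X_G(g) := (dL_g)_e G$, and the assignment $G \mapsto X_G$ identifies $\mathfrak{g}$ with the space of left-invariant smooth vector fields on $\mathcal{G}$. Working in the chart $\varphi$, the Lie bracket of vector fields is given by the usual Fr\'echet formula $[X,Y](p) = (dY)_p X(p) - (dX)_p Y(p)$, and one checks that the bracket of two left-invariant fields is again left-invariant, so there is a unique $[G,H] \in \mathfrak{g}$ with $[X_G, X_H] = X_{[G,H]}$. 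Bilinearity, antisymmetry and the Jacobi identity are inherited from the bracket of vector fields; continuity $\norm{[G,H]} \leq C \norm{G}\,\norm{H}$ follows from boundedness of the second derivative of $m$ at $(e,e)$ inside the chart.

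Next I would construct the exponential. Given $G \in \mathfrak{g}$, consider the initial value problem $\dot\gamma(t) = X_G(\gamma(t))$, $\gamma(0) = e$. Pushing forward by $\varphi$, this becomes a Banach-space ODE with a smooth, locally Lipschitz right-hand side, so Picard--Lindel\"of yields a unique solution $\gamma_G$ on a maximal interval $(-T,T)$. The left-invariance identity $\gamma_G(s+t) = \gamma_G(s)\gamma_G(t)$, valid whenever both sides are defined, allows one to extend $\gamma_G$ to all of $\mathbb{R}$ by repeated left translation. I then set $\exp(G) := \gamma_G(1)$. The scaling relation $\gamma_{tG}(1) = \gamma_G(t)$ (both solve the same ODE with the same initial value after time reparametrisation) gives $\exp(tG) = \gamma_G(t)$, from which the derivative identity
\begin{equation*}
  \frac{\dd}{\dd t}\exp(tG)\Big|_{t=0} = \dot\gamma_G(0) = X_G(e) = G
\end{equation*}
is immediate, and evaluating at general $t$ by the chain rule gives the stated formula for all $t$.

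Finally, smoothness of $\exp$ follows from smooth dependence of Banach-space ODEs on initial data and parameters applied to the joint flow of $G \mapsto \gamma_G$. From the derivative computation above one reads off $(d\exp)_0 = \mathrm{id}_\mathfrak{g}$, and the Banach-space inverse function theorem then produces open neighbourhoods $U_\mathfrak{g} \subseteq \mathfrak{g}$ of $0$ and $U_\mathcal{G} \subseteq \mathcal{G}$ of $e$ such that $\exp|_{U_\mathfrak{g}}: U_\mathfrak{g} \to U_\mathcal{G}$ is a diffeomorphism. The step I expect to be genuinely delicate is the verification that the Lie bracket constructed from left-invariant vector fields is continuous as a bilinear map on the Banach space $\mathfrak{g}$: this requires extracting uniform bounds on the second Fr\'echet derivative of $m$ in a chart around $(e,e)$ and exploiting left-invariance to transfer these estimates back to $\mathfrak{g}$. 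Once this estimate is in hand, the remaining algebraic identities and the ODE-theoretic steps are standard.
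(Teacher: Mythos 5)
The paper does not prove this proposition itself but simply cites Chapter II of Neeb's survey, where the argument is exactly the one you outline: left-invariant vector fields furnish the bracket, the exponential arises as the time-one map of their flows, and the Banach-space inverse function theorem applied to \( (d\exp)_0 = \mathrm{id}_{\mathfrak{g}} \) gives the local diffeomorphism. Your sketch is correct and follows that standard route, including the accurate identification of the continuity of the bracket as the one step requiring a genuine Banach-space estimate on the second derivative of the multiplication in a chart.
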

We call the inverse map~\( \operatorname{log} : =(\exp\vert_{ U_{ \mathfrak{g} }  } )^{ -1 } : U_{ \mathscr{G}  } \to U_{ \mathfrak{g} }   \) the~\emph{logarithm}. The exponential, logarithm and multiplication are smooth, hence the map
\begin{align}
  \{ (X,Y): X,Y \in U_{ \mathfrak{g} } , \, \exp (X)\exp (Y) \in U_{ \mathscr{G}  } \} &\to \mathfrak{g}, \nonumber\\
  (X,Y) &\mapsto \operatorname{log} ( \exp (X) \exp (Y) ) \label{BCH function}
\end{align}
is also smooth. Hence, the domain in~(\ref{BCH function}) is open as an intersection of open sets and nonempty as \( ( 0,0) \) is contained. Therefore, 
\begin{equation*}
  \lim_{ \delta \to 0 }  \sup_{ \norm{ X } , \norm{ Y } < \delta  } \norm{ \operatorname{log} ( \exp ( X) \exp (Y) ) } = 0.
\end{equation*} 
We can fix \( \rho' > 0 \) such that the open ball centered at~\( 0 \in \mathfrak{g} \) with radius~\( \rho' \), 
\[ B_{ \rho' } := \{ G \in \mathfrak{g} : \norm{ G } < \rho' \},\]
is contained in the open set~\( U_{ \mathfrak{g} } \). We can also fix \( \rho'' \in (0, \rho') \) such that the domain in~(\ref{BCH function}) contains \( B_{ \rho'' } \times B_{ \rho'' } \). In fact, for every~\( \delta_1 < \rho' \), there exists \( \delta_2 < \rho' \) such that \( B_{ \delta_1 } \times B_{ \delta_2 } \) is contained in the domain of (\ref{BCH function}). These properties and the parameters~\( \rho', \rho'' \) will be used extensively throughout this work.

\subsection{Stochastic processes on Lie groups}
\label{Stochastic processes on Lie groups}
From now on, we assume that \( \mathscr{G}  \) is a separable Banach-Lie group with associated Lie algebra~\( \mathfrak{ g }  \). Then \( \mathfrak{g} \) is obviously separable as well. Therefore, we can introduce the concept of measurability and random variables with values in~\( \mathscr{G}  \). 

Throughout this work, \( (\Omega,\mathscr{F},\mathbb{P}) \) denotes a complete probability space. A map~\( x:\Omega\to\mathscr{G} \) is called a \emph{random element} in~\( \mathscr{G} \) if \( x^{-1}(A)\in\mathscr{F} \) for every open set~\( A\subseteq\mathscr{G} \) (equivalently, \(x\) is measurable with respect to the Borel-\(\sigma\)-algebra on~\(\mathscr{G}\)). Since multiplication and inversion are continuous (hence Borel-measurable) and \( \mathscr{G} \) is separable, products and inverses of random elements are again random elements. In what follows, \( ( \mathscr{ F } ^{ s } _t)_{ 0 \leq s \leq t < \infty  }  \) shall be an \emph{additive filtration}, i.e., a family of~\( \sigma\)-algebras~\( \mathscr{F}^s_t\subseteq\mathscr{F} \) such that for all~\( s\leq t\leq u\leq v \) the following hold:
\begin{enumerate}[1)]
  \item \( \mathscr{F}^s_t \) and \( \mathscr{F}^u_v \) are independent,
  \item \( \mathscr{F}^t_u \subseteq \mathscr{F}^s_v \).
\end{enumerate}
We further assume that the \emph{usual hypotheses} hold; that is, for fixed~\( s \geq 0 \), the filtration~\( (\mathscr{F}^{s}_{t})_{t \geq s} \) is right-continuous in~\( t \), and for all~\( t \geq s \), the~\( \sigma \)-algebra~\( \mathscr{F}^{s}_{t} \) is complete. One may start with a family~\( (\mathscr{F}^{s}_{t})_{s \leq t} \) that satisfies neither of the usual hypotheses. This is not a problem, since the augmentation~\( (\overline{\mathscr{F}}^{s}_{t})_{t \geq s} \) for fixed~\( s \) remains additive (see Chapter~1.1 of~\cite{ito2013stochastic}).

We define a \emph{one-parameter stochastic process} in~\( \mathscr{G} \) as a family~\( (x_{t})_{0 \leq t < \infty} \) of random elements. A \emph{two-parameter stochastic process} is a family of the form~\( (x^{s}_{t})_{0 \leq s \leq t < \infty} \) of random elements. We will usually write \( (x^{s}_{t})_{s \leq t} \) and \( (\mathscr{F}^{s}_{t})_{s \leq t} \) instead, since the index set is always understood to be 
\[\{(s,t) \in [0,\infty)^{2} :\, s \leq t\}. \]
A process~\( (x^{s}_{t})_{s \leq t} \) is called \emph{adapted to~\( (\mathscr{F}^{s}_{t})_{s \leq t} \)} if \( x^{s}_{t} \) is measurable with respect to~\( \mathscr{F}^{s}_{t} \) for all~\( s \leq t \). We say \( x \) is \emph{stochastically continuous} in \( (s,t) \) if
  \begin{equation*} 
    \lim_{ h,k \to 0} \mathbb{ P } {\left( (x^{ s }_t)^{ -1 } x^{ s+h } _{ t+k } \in U  \right)} = 1 \qquad \text{for all open sets~\( U \subseteq \mathscr{G} \) containing~\( e \)}. 
  \end{equation*}
We do not restrict the sign of the null sequences~\( h,k \), but assume throughout that~\( s+h, t+k \geq 0 \). A process~\( x \) is called \emph{stochastically continuous} if it is stochastically continuous in all pairs~\( s \leq  t \). Taking the topology on \( \mathscr{ G }  \) into account, this is equivalent to \( x^{ s+h } _{ t+k } \to x^{ s } _t \) in probability.
\begin{definition}
  \label{Multiplicative process}
  A two-parameter stochastic process~\( x= (x^{ s } _t) _{ s \leq t }  \) is called a \emph{multiplicative process} with respect to~\( (\mathscr{ F } ^{ s } _t)_{ s \leq t }  \) if
  \begin{enumerate}[1)]
    \item the process~\( x \) is adapted to \( (\mathscr{ F } ^{ s } _t)_{ s \leq t }  \),
    \item the process~\( x \) is \emph{multiplicative}, i.e., \( x^{ s } _s = e \) and \( x^{ s } _t x^{ t }_u = x^{ s } _u  \) almost surely for all~\( s \leq t \leq u \), \label{multiplicativity order}
    \item the process~\( x \) is stochastically continuous.
  \end{enumerate}
\end{definition}
One may reverse the order of multiplication in~\ref{multiplicativity order} of Definition~\ref{Multiplicative process}, so that \( x^{t}_{u} x^{s}_{t} = x^{s}_{u} \), and adjust our results accordingly. In that case, \( x^{-1} = \big( (x^{s}_{t})^{-1} \big)_{s \leq t} \) is multiplicative from left to right. 

In practice, most multiplicative processes appear as time-ordered exponentials, i.e., stochastic exponentials with respect to It\^o, Stratonovich, and Marcus differentials, of additive processes~\cite{skorokhod1982operator}. Conventionally, a process on Lie groups is called a \emph{semimartingale} if it admits a \emph{stochastic logarithm} in the associated Lie algebra. It is known that if a process \( x \) on a finite-dimensional Lie group is both, multiplicative and a semimartingale, then it is the stochastic exponential of an additive process~\cite{estrade1992exponentielle, hakim2006exponentielle}. However, our work so far still contributes in important ways. First, we are only aware of such results for the finite-dimensional case. Second, even if finite-dimensional Lie groups are considered, it is not yet clear which multiplicative processes are semimartingales. Therefore, given a multiplicative process, it is not possible to consider the stochastic logarithm, which would be an additive process that lives in a finite-dimensional Hilbert space and hence would be a process with very convenient properties~\cite{gikhman2004theory, ito2013stochastic}. To the best of our knowledge, it has not even been established that L\'evy processes on arbitrary finite-dimensional Lie groups are semimartingales. This is mentioned in~\cite{kovalchuk1993semimartingale}.

In the one-dimensional case, given an additive process~\( (X_t)_t \), a multiplicative process can be constructed by setting~\( x_t = \exp ( X_t) \). In higher-dimensional matrix spaces and other Lie algebras, this construction does not work, except if \( X_s \) and \( X_t \) commute almost surely. Otherwise, an error arises between~\( \exp ( X_t ) \) and \( \exp(X_t - X_s)\exp (X_s)  \). In fact, if \( U,V \in \mathfrak{g}\) are sufficiently small in norm, then the Baker-Campbell-Hausdorff formula~\cite{dynkin1947calculation, dynkin1950normed} holds, i.e.,
\begin{equation*}
    \exp ( U) \exp ( V) = \exp (U + V + \tfrac{ 1 }{ 2 } [U,V] + \tfrac{ 1 }{ 12 } {\left( [U, [U,V]]  - [V , [U,V]] \right)} + \dots ) ,
\end{equation*}
where the right-hand side contains a series of nested commutators. For nilpotent Lie algebras, the series is eventually cut off. In particular, the commutator of the Lie algebra associated with the Heisenberg group satisfies \( [U,[U,V]] = [V,[U,V]] =0 \). Therefore, the Baker-Campbell-Hausdorff formula simplifies to 
\begin{equation*}
    \exp ( U) \exp ( V) = \exp (U + V + \tfrac{ 1 }{ 2 } [U,V]).
\end{equation*}
This fact is used to show that on the Lie algebra of the Heisenberg group, we have
\begin{align*}
    \lim_{ \max \Delta_{ n } \to 0 } \prod_{ n } \exp ( \Delta _n X) &= \lim_{ \max \Delta _n \to 0 } \exp ( \sum_{ n } \Delta_{ n } X + \tfrac{ 1 }{ 2 } \sum_{ n<m }  [\Delta_{ n } X,\Delta_{ m } X]) \\
                                                                     &= \exp ( \int_{ 0 < t_1 \leq  t } \dd X_{ t_1 }  + \tfrac{ 1 }{ 2 } \int_{ 0<t_1<t_2 \leq t } [ \dd X_{ t_1 } , \dd X_{ t_2 } ] ).
\end{align*}
As a result, we can explicitly compute a multiplicative process on the Heisenberg group using an additive process on the Lie algebra. An example of such a construction is given in Section \ref{Multiplicative processes on the Heisenberg group}.

In~\cite{behme2012multivariate}, it is shown that multiplicative processes can be used to explicitly express solutions of generalized Ornstein-Uhlenbeck SDEs in higher dimensions. Furthermore, an equivalence criterion is stated, characterizing whether the resulting Ornstein-Uhlenbeck process has a stationary distribution based on properties of the multiplicative process.

Before we continue with properties of multiplicative processes, we introduce a local notion of a norm on~\( \mathscr{G}  \). For brevity, for open sets~\( U \subseteq \mathscr{G} \) containing \( e \), we set
\begin{equation}
  U^{ 0 } = \{e\}, \quad U^{ n } = \{ g_1 \cdots g_n :\, g_1, \dots, g_n \in U \}. \label{Cover of steps}
\end{equation}
The next lemma records some elementary properties of this construction.
\begin{lemma}
  \label{Local metric of Lie groups}
  Suppose \( \delta < \rho ' \) and set~\( U := \operatorname{exp}{\left( B_{ \delta  }  \right)}  \). Then, for each~\( j,k \in \mathbb{ N }  \) with~\( j \leq k \), we have:
  \begin{enumerate}[1)]
    \item if \( g \in U ^{ j }  \), then \( g^{ -1 } \in U ^{ j }  \),
    \item if \( g \in U^{ j } , \, h \in U ^{ k }  \), then \(  gh, hg \in U^{ j+k }  \),
  \end{enumerate}
\end{lemma}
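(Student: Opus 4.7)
The plan is to prove both claims directly from the definition of $U^n$ in~(\ref{Cover of steps}), relying on two elementary facts about the ball $B_\delta$: it is symmetric under negation (that is, $X \in B_\delta \iff -X \in B_\delta$), and the exponential map satisfies $\exp(X)^{-1} = \exp(-X)$ for every $X \in B_\delta \subseteq U_\mathfrak{g}$. The latter is the standard one-parameter subgroup identity, which in the Banach-Lie setting follows from the derivative formula in the preceding proposition: the curve $t \mapsto \exp(tX)$ is a group homomorphism $\mathbb{R} \to \mathcal{G}$, and evaluating at $t = \pm 1$ gives $\exp(X)\exp(-X) = \exp(0) = e$ (cf.\ Chapter~II of~\cite{neeb2015towards}).

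For part~1), I would write $g \in U^j$ as a product $g = \exp(X_1) \cdots \exp(X_j)$ with $X_i \in B_\delta$, invert term by term to obtain
\begin{equation*}
  g^{-1} = \exp(X_j)^{-1} \cdots \exp(X_1)^{-1} = \exp(-X_j) \cdots \exp(-X_1),
\end{equation*}
and observe that each $-X_i$ still lies in $B_\delta$, so that every factor on the right belongs to $U$ and hence $g^{-1} \in U^j$. For part~2), I would simply concatenate the product representations of $g$ and $h$: if $g = g_1 \cdots g_j$ and $h = h_1 \cdots h_k$ with all factors in $U$, then by associativity $gh = g_1 \cdots g_j h_1 \cdots h_k$ and $hg = h_1 \cdots h_k g_1 \cdots g_j$ exhibit each product as $j+k$ elements of $U$ multiplied together, hence both lie in $U^{j+k}$. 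The degenerate cases $j = 0$ or $k = 0$ are covered by the convention $U^0 = \{e\}$ together with $e\cdot h = h\cdot e = h$.

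There is no genuine obstacle here; the lemma is essentially bookkeeping once the exp-inversion identity is available. The one subtle ingredient is the identity $\exp(X)^{-1} = \exp(-X)$, which is not quite spelled out in the preceding proposition and must either be cited from~\cite{neeb2015towards} or derived from the one-parameter subgroup property of $t \mapsto \exp(tX)$. I would also remark in passing that the hypothesis $j \leq k$ is never actually invoked: part~1) refers only to $j$, and part~2) is symmetric in $j$ and $k$. The assumption appears to be carried along only for notational uniformity with subsequent uses of the lemma.
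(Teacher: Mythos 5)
Your proof is correct; the paper states this lemma without any proof, treating it as elementary bookkeeping, and your argument (term-by-term inversion via \( \exp(X)^{-1} = \exp(-X) \) together with the symmetry of \( B_{\delta} \) for part 1, and concatenation of the product representations for part 2) is precisely the standard argument the authors evidently have in mind. Your side remarks are also accurate: the identity \( \exp(X)^{-1} = \exp(-X) \) is the one input not literally spelled out in the preceding proposition, and the hypothesis \( j \leq k \) is indeed never used.
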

Lemma~\ref{Local metric of Lie groups} is crucial to show convergence results on Lie groups. On normed vector spaces, the properties listed in the lemma above simplify to properties of the norm. If \( \mathscr{G}   \) were commutative, then its connected component containing~\( e \) could be identified with its Lie algebra and the exponential would act as the identity. In that case, \( g \in U_{ \delta  } ^{ j } \equiv B^{ j } _{ \delta  } = B_{ j \delta  }  \), i.e., \( \norm{ g } \leq j \delta  \). 

On non-commutative groups, these inequalities only hold for open sets of the form~\( U_{ \delta  } = \operatorname{exp} ( B _{ \delta  } ) \) for small~\( \delta  \). This is not a problem as the topology on~\( \mathscr{G}  \) is induced by the system 
\begin{equation}
  \tau = \{ g \exp ( B_{ \delta  }) :\, g \in \mathscr{G}, \delta \in (0, \rho ' )\}, \label{Symmetric system}
\end{equation}
where we define sets of the form~\( gU := \{ gh : h \in U \} \). Therefore, it is sufficient to show convergence with open sets of the form~\( U_{ \delta  }  \) as for any open set~\( U \) with~\( e \in U\), there is \( \delta > 0 \) such that \( U_{ \delta  } \subseteq U \). 

For a process~\( x= (x^{ s } _t)_{ s \leq t }  \) satisfying the first two conditions in Definition~\ref{Multiplicative process}, we have
\begin{equation*}
  (x^{ s } _t)  ^{ -1 } x^{ s+h } _{ t+k } = (x^{ s+h } _{ t})^{ -1 } (x^{ s } _{ s+h } )^{ -1 } x^{ s+h } _t x^{ t } _{ t+k } .
\end{equation*}
Hence, using Lemma~\ref{Local metric of Lie groups}, if \( x^{ t } _{ t+k } \to e \) and \( x^{ s }_{ s+h } \to e  \) in probability, then also \( (x^{ s } _{ s+h } )^{ -1 } \to e \), and the process would be stochastically continuous. We find that a process that already satisfies the first two conditions of a multiplicative process is stochastically continuous if and only if
\begin{equation*}
  \lim_{ h \to 0} \mathbb{ P } ( (x_{ t } )^{ -1 } x _{ t+h } \in U ) = 1 \qquad \text{for all~\( t> 0 \) and \( U \) open with~\( e \in U \)} .
\end{equation*}
Again, we assume \( t+h \geq 0 \). Not only can we find conditions that are easy to check to obtain stochastic continuity. But stochastic continuity also implies a locally uniform version of stochastic continuity, which we call \emph{uniform stochastic continuity}. The proof uses a classical compactness argument.
\begin{lemma}
  \label{Uniform stochastic continuity}
  Let \( x = (x_t)_t \) be a stochastically continuous process and \( U \) an open set containing~\( e \). Then, for all~\( T , \alpha > 0 \), there is \( H > 0 \) such that \( \sup_{ \substack{ \abs{ h } < H \\ t < T }  } \mathbb{ P } {\left( (x_t)^{ -1 } x_{ t+h } \notin U \right)}  \leq  \alpha \).
\end{lemma}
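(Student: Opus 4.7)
The plan is to combine a classical compactness argument on $[0,T]$ with an ``anchor point'' trick that re-expresses the increment $(x_t)^{-1} x_{t+h}$ in terms of two increments sharing a common base time.

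First I would shrink $U$ to a symmetric neighborhood that is well suited to multiplication. Using the continuity of multiplication at $(e,e)$ --- equivalently, the uniform limit $\lim_{\delta \to 0} \sup_{\norm{X},\norm{Y} < \delta} \norm{\log(\exp(X)\exp(Y))} = 0$ established right after \eqref{BCH function}, together with part~1 of Lemma~\ref{Local metric of Lie groups}, which gives $V^{-1} = V$ for any $V$ of the form $\exp(B_{\delta'})$ --- I would pick $\delta' > 0$ small enough that $V := \exp(B_{\delta'})$ satisfies $V \cdot V \subseteq U$, hence in particular $V^{-1} \cdot V \subseteq U$.

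Next I would upgrade pointwise to uniform control by a finite cover. Stochastic continuity provides, for each $t \in [0,T]$, some $\eta_t > 0$ with $\mathbb{P}\bigl((x_t)^{-1} x_{t+k} \notin V\bigr) \leq \alpha/2$ whenever $\abs{k} < \eta_t$ (and $t+k \geq 0$). The open intervals $(t - \eta_t/2, t + \eta_t/2)$ cover the compact set $[0,T]$, so a finite subcollection indexed by $t_1, \dots, t_N$ already does, and I set $H := \tfrac{1}{2} \min_{i \leq N} \eta_{t_i} > 0$. For arbitrary $t \in [0,T)$ and $\abs{h} < H$ with $t+h \geq 0$, I choose $i$ with $\abs{t - t_i} < \eta_{t_i}/2$; then also $\abs{t+h - t_i} \leq \abs{t - t_i} + \abs{h} < \eta_{t_i}$, so the pointwise bound applies to both anchor increments $(x_{t_i})^{-1} x_t$ and $(x_{t_i})^{-1} x_{t+h}$. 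On the intersection of the two corresponding good events (probability at least $1 - \alpha$), the decomposition
\[
  (x_t)^{-1} x_{t+h} \;=\; \bigl[(x_{t_i})^{-1} x_t\bigr]^{-1} \cdot (x_{t_i})^{-1} x_{t+h} \;\in\; V^{-1} \cdot V \;\subseteq\; U
\]
yields the claimed uniform estimate.

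The only delicate step is the preparatory choice of $V$: in the non-commutative setting, $V$ has to absorb simultaneously an inversion and a multiplication, but this is exactly what the local symmetric system of neighborhoods \eqref{Symmetric system} together with Lemma~\ref{Local metric of Lie groups} was set up to provide. The subsequent compactness/anchor-point argument is then entirely routine, mirroring the standard proof of local uniform stochastic continuity on $\mathbb{R}^n$, with the triangle inequality replaced by the multiplicative decomposition above.
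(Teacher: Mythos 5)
Your proof is correct and is precisely the classical compactness argument the paper alludes to (the paper omits the details, stating only that the proof uses a classical compactness argument). The anchor-point decomposition \( (x_t)^{-1}x_{t+h} = \bigl[(x_{t_i})^{-1}x_t\bigr]^{-1}\,(x_{t_i})^{-1}x_{t+h} \), combined with choosing a symmetric \( V = \exp(B_{\delta'}) \) with \( V\cdot V \subseteq U \) via the local estimate following~\eqref{BCH function} and Lemma~\ref{Local metric of Lie groups}, is exactly the right substitute for the triangle inequality in this non-commutative setting.
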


\section{Regularity}\label{Regularity} 
\subsection{A modification with c\`adl\`ag paths}

Skorokhod~\cite{skorokhod1982operator} referred to processes similarly defined as those in Definition~\ref{Multiplicative process} as \emph{stochastic semigroups}. We refer to these as multiplicative processes to emphasize their formal resemblance to additive processes~\cite{gikhman2004theory, ito2013stochastic}. The notion of additive processes generalizes to separable Banach spaces rather intuitively. On Banach-Lie groups, the binary operation is no longer commutative and the existence of a global norm is not guaranteed. Our first endeavor is to show that multiplicative processes have a modification with c\`adl\`ag paths. Continuity is a local property. Since multiplication in a Banach-Lie group locally resembles addition, and the group is equipped with a local metric, we can manage without global commutativity or a globally defined norm. 

We first state some properties for products~\( x_1 \cdots x_n \) of independent random elements in~\( \mathscr{G}  \). These results are multiplicative versions of the results shown in~\cite{gikhman2004theory} and~\cite{ito2013stochastic}. 
\begin{lemma}
  \label{Maximum oscillation bound}
     For \( n \in \mathbb{ N }  \), let \( x_1, \dots, x_n \) be independent random elements in~\( \mathscr{G}  \). Define \( x^{ j } _k = x_{ j+1 } \cdots x_k \) for~\( j \leq k \leq n\). Suppose for all~\( j \leq n \) and some~\( \delta \in (0, \rho ') , \alpha > 0 \), we have \( \mathbb{ P } (x^{ 0 } _j \notin U_{ \delta  } ) \leq \alpha   \). Then
  \begin{equation*}
     ( 1- \alpha ) \mathbb{ P } (\exists_{ j \leq n }:\,  x^{ j } _n  \notin U^{ 2 } _{ \delta  }) \leq \mathbb{ P } ( x^{ 0 } _n \notin U_{ \delta  } ) .
  \end{equation*}
\end{lemma}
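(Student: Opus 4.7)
The statement is a multiplicative version of the classical Ottaviani--Skorokhod maximal inequality for sums of independent random variables. The plan is to adapt the standard proof: introduce a first-exit index, use independence to split probabilities, and use Lemma~\ref{Local metric of Lie groups} in place of the triangle inequality to transfer between the partial products $x^0_j$ (controlled by the hypothesis) and the tails $x^j_n$ (appearing in the conclusion).

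The algebraic bridge is the following simple observation: if $x^0_j \in U_\delta$ and $x^0_n \in U_\delta$, then by part~1 of Lemma~\ref{Local metric of Lie groups} one has $(x^0_j)^{-1} \in U_\delta$, and hence by part~2,
\[ x^j_n \;=\; (x^0_j)^{-1}\, x^0_n \;\in\; U_\delta \cdot U_\delta \;=\; U^2_\delta. \]
Contrapositively, $\{x^j_n \notin U^2_\delta\} \cap \{x^0_j \in U_\delta\} \subseteq \{x^0_n \notin U_\delta\}$.

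Next I would introduce a \emph{reverse} first-exit index,
\[ \sigma := \max\{\, j \in \{0,1,\dots,n\} : x^j_n \notin U^2_\delta \,\}, \]
with the convention $\sigma := -1$ when the set is empty. Since $x^j_n = x_{j+1}\cdots x_n$, the event $\{\sigma = j\}$ is $\sigma(x_{j+1},\dots,x_n)$-measurable, so by independence of $x_1,\dots,x_n$ it is independent of $x^0_j = x_1 \cdots x_j$. The events $\{\sigma = 0\},\dots,\{\sigma = n\}$ are disjoint and their union equals $\{\exists\, j \leq n : x^j_n \notin U^2_\delta\}$. Partitioning along $\sigma$ and applying the above observation,
\begin{align*}
  \mathbb{P}(x^0_n \notin U_\delta)
   &\geq \sum_{j=0}^n \mathbb{P}\bigl(\sigma = j,\, x^0_n \notin U_\delta\bigr) \\
   &\geq \sum_{j=0}^n \mathbb{P}\bigl(\sigma = j,\, x^0_j \in U_\delta\bigr) \\
   &= \sum_{j=0}^n \mathbb{P}(\sigma = j)\, \mathbb{P}(x^0_j \in U_\delta) \\
   &\geq (1-\alpha)\, \mathbb{P}\bigl(\exists\, j \leq n : x^j_n \notin U^2_\delta\bigr),
\end{align*}
which is the claimed inequality.

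I expect the main subtlety to lie precisely in choosing the direction of the first-exit index: the hypothesis controls the \emph{left}-partial products $x^0_j$, while the conclusion concerns the \emph{right}-tail products $x^j_n$, so $\sigma$ must run backwards in $j$ in order for $\{\sigma = j\}$ to be independent of $x^0_j$. Once that choice is made, the non-commutativity and the absence of a global norm no longer matter: Lemma~\ref{Local metric of Lie groups} supplies exactly the two inclusion properties needed ($U_\delta^{-1} \subseteq U_\delta$ and $U_\delta \cdot U_\delta \subseteq U^2_\delta$) that in the vector-space proof are provided for free by the triangle inequality.
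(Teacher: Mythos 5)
Your proof is correct and takes essentially the same route as the paper's: your event \( \{\sigma = j\} \) is precisely the paper's disjoint decomposition \( \bigcap_{k>j}\{x^j_n \notin U^2_\delta,\, x^k_n \in U^2_\delta\} \), and the key inclusion, the independence splitting, and the final bound all coincide. The only difference is presentational — a last-exit random index rather than an explicitly written disjoint union.
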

\begin{proof}
  We first show
  \begin{equation}
    \bigcup_{ j \leq n } \bigcap_{ k > j } \{ x^{ 0 } _j \in U_{ \delta  } ,  x^{ j } _n \notin U^{ 2 } _{ \delta  } , x^{ k } _n \in U^2_{ \delta  }  \} \subseteq \{ x^{ 0 } _n \notin  U_{ \delta  } \}  . \label{Maximum oscillation bound-1}
  \end{equation}
  If \( x^{ 0 } _j \in U_{ \delta  } \) and \( x^{ j } _n \notin U^{ 2 } _{ \delta  }  \), then \( x^{ 0 } _n \notin U_{ \delta  }  \) by Lemma~\ref{Local metric of Lie groups}. Therefore,
  \begin{equation*}
    \{ x^{ 0 } _j \in U_{ \delta  } , x^{ j } _{ n } \notin U^{ 2 } _{ \delta  }  \}  \subseteq  \{ x^{ 0 } _n \notin U_{ \delta  }  \},
  \end{equation*}
  This shows (\ref{Maximum oscillation bound-1}). Notice that the left-hand side is a union of disjoint sets. It is easy to see that
  \begin{equation}
    \{ \exists_{ j \leq n } : x^{ j } _n \notin U^{ 2 } _{ \delta  }  \} = \bigcup_{ j \leq n } \bigcap_{ k >  j } \{ x^{ j } _n \notin U^{ 2 } _{ \delta  } , x^{ k } _n \in U_{ \delta  } ^{ 2 }  \} . \label{Maximum oscillation bound-2}
    \end{equation}
  Now we are in a position to prove the lemma. We have
  \begin{align*}
    \mathbb{ P } (x^{ 0 } _n \notin U_{ \delta  } ) \overset{\text{(i)}}&{\geq} \mathbb{ P } {\biggl( \bigcup_{ j \leq n } \bigcap_{ k >  j }  \{ x^{ 0 } _j \in U_{ \delta  } , x^{ j } _{ n } \notin U^{ 2 } _{ \delta  } , x^{ k } _{ n } \in U^{ 2 } _{ \delta  }  \} \biggr)}           \\
    \overset{\text{(ii)}}&{=}  \sum_{ j \leq n } \mathbb{ P } {\biggl( \bigcap_{ k >  j }  \{ x^{ 0 } _j \in U_{ \delta  } , x^{ j } _{ n } \notin U^{ 2 } _{ \delta  } , x^{ k } _{ n } \in U^{ 2 } _{ \delta  } \} \biggr) }          \\
    \overset{\text{(iii)}}&{=} \sum_{ j \leq n } \mathbb{ P } {\left( \{ x^{ 0 } _j \in U_{ \delta  } \} \right)} \cdot \mathbb{ P } {\biggl( \bigcap_{ k >  j }  \{ x^{ j } _{ n } \notin U^{ 2 } _{ \delta  } , x^{ k } _{ n } \in U^{ 2 } _{ \delta  } \} \biggr)}          \\
    \overset{\text{(iv)}}&{\geq} (1 - \alpha ) \sum_{ j \leq n } \mathbb{ P } {\biggl( \bigcap_{ k > j } \{ x^{ j } _n \notin U^{ 2 } _{ \delta  } , x^{ k } _{ n } \in U^{ 2 } _{ \delta  } \} \biggr)} \\
    \overset{\text{(v)}}&{=} (1- \alpha ) \mathbb{ P } {\left( \exists_{ j \leq n } :\,  x^{ j } _n \notin U^2_{ \delta  }  \right)}.  
  \end{align*}
  Here, (i) holds by (\ref{Maximum oscillation bound-1}), (ii) as (\ref{Maximum oscillation bound-1}) is a disjoint union of sets. In (iii), we use the independence of the random elements. (iv) is valid by the assumption in the assertion. (v) holds due to (\ref{Maximum oscillation bound-2}) and the fact that the union is over disjoint sets.
\end{proof}
\begin{lemma} 
  \label{Largest step bound}
  For \( n \in \mathbb{ N }  \), let \( x_1, \dots, x_n \) be independent random elements in~\( \mathscr{G}  \). Define \( x^{ j } _{ k } = x_{ j+1 } \cdots x_k \) for~\( j \leq k \leq n \). Then, for all~\( \delta \in ( 0 , \rho ')\), we have
  \begin{equation*}
    \mathbb{ P } (\exists_{ j<k \leq n } :\,  x^{ j } _k \notin U^2_{ \delta  } ) \leq \mathbb{ P } ( \exists_{ j \leq n } :\, x^{ j } _n \notin U_{ \delta  } ) .
  \end{equation*}
\end{lemma}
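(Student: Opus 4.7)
The plan is to show that the probability inequality actually follows from a purely deterministic (pointwise) inclusion of events, so that the independence hypothesis plays no role in this particular lemma. The starting observation is the algebraic identity
\begin{equation*}
  x^{ j } _{ k } = x^{ j } _{ n } \cdot (x^{ k } _{ n } )^{ -1 } \qquad \text{for all } j < k \leq n,
\end{equation*}
which is immediate from the definition $x^{ \ell } _{ m } = x_{\ell+1}\cdots x_{m}$. I would then apply Lemma~\ref{Local metric of Lie groups} to the set $U := U_{\delta} = \exp(B_{\delta})$, which is legitimate because $\delta < \rho'$: its first part (applied with the layer index equal to $1$) says that $U_{\delta}$ is closed under inversion, so if $x^{ k } _{ n } \in U_{\delta}$ then $(x^{ k } _{ n } )^{-1} \in U_{\delta}$; combined with $x^{ j } _{ n } \in U_{\delta}$, the product lies in $U_{\delta} \cdot U_{\delta} = U^{2}_{\delta}$ by the very definition~(\ref{Cover of steps}).

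Taking the contrapositive of this implication, for each fixed pair $j < k \leq n$, if $x^{ j } _{ k } \notin U^{2}_{\delta}$ then at least one of $x^{ j } _{ n }$ and $x^{ k } _{ n }$ must fail to lie in~$U_{\delta}$. Unioning over $j<k\leq n$ yields the pointwise containment
\begin{equation*}
  \{ \exists_{j < k \leq n} :\, x^{ j } _{ k } \notin U^{2}_{\delta} \} \;\subseteq\; \{ \exists_{j \leq n} :\, x^{ j } _{ n } \notin U_{\delta} \},
\end{equation*}
and monotonicity of $\mathbb{P}$ gives the asserted bound. There is essentially no obstacle here: once one recognises the rewrite $x^{ j } _{ k } = x^{ j } _{ n } (x^{ k } _{ n })^{-1}$, the rest is a direct application of the two clauses of Lemma~\ref{Local metric of Lie groups}, and no compactness or probabilistic tools (not even independence) enter the argument. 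The only minor bookkeeping is noting that $\delta < \rho'$ makes $U_{\delta}$ satisfy the hypothesis of Lemma~\ref{Local metric of Lie groups}, and that the degenerate case $x^{ n } _{ n } = e \in U_{\delta}$ is consistent with the right-hand event.
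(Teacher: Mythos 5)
Your proof is correct and follows essentially the same route as the paper, which likewise reduces the inequality to the pointwise event inclusion obtained from $x^{j}_{k} = x^{j}_{n}(x^{k}_{n})^{-1}$ and Lemma~\ref{Local metric of Lie groups}; you merely spell out the use of part 1) for the inversion step, which the paper leaves implicit. Your observation that independence is not needed here is also accurate.
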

\begin{proof}
  By Lemma~\ref{Local metric of Lie groups} 2), the right-hand event is larger than the left-hand one.
\end{proof}
We can define a notion of oscillations for maps on a Lie group similar to real-valued functions although we do not have an explicit norm to work with. We only consider oscillations that are "small enough". To be precise, let \( \delta > 0 \) such that \( U_{ \delta  } = \operatorname{exp} ( B _{ \delta  } ) \subseteq \operatorname{dom} (\operatorname{log})   \). For finite~\( \mathbb{ T } = \{ t_0 < \dots < t_n \}  \), we define the \emph{number of oscillations} larger than~\( \delta  \) of~\( x \) on~\( \mathbb{ T }  \) as the random variable
\begin{equation}
  \bm{ O } _x^{ \delta  } ( \mathbb{ T } ) = \max \{ m \in \mathbb{ N } :\, \text{there are \( \tau _0 < \dots < \tau_{ m } \in \mathbb{ T }  \) such that \( x^{ \tau_{ j-1 }  }_{ \tau_{ j }  } \notin U_{ \delta  } \}  \)} . \label{Oscillation counter 1}
\end{equation}
For countably infinite~\( \mathbb{ T }  \), we define
\begin{equation}
  \bm{ O } _x^{ \delta  } ( \mathbb{ T } ) = \sup \{ \bm{ O } _x^{ \delta  } ( \mathbb{ T } ' ) :\, \mathbb{ T } ' \subseteq \mathbb{ T } , \, \text{\( \mathbb{ T } ' \) is finite} \}. \label{Oscillation counter 2}
\end{equation}
The following corollary states straightforward properties of this quantity:
\begin{corollary}
  \label{Properties of oscillation counter}
  For \(\mathbb{ S } , \mathbb{ T } \subseteq [0, \infty ) \) countable and \( \delta \in ( 0 , \rho ') \), the following assertions hold:
  \begin{enumerate}[1)]
    \item If \( \mathbb{ S }  \subseteq  \mathbb{ T }  \), then \( \bm{ O } _x^{ \delta  } ( \mathbb{ S } ) \leq \bm{ O } _x ^{ \delta  } ( \mathbb{ T } ) \). \label{Properties of oscillation counter 1)}
    \item If \( (t_m)_m \) is an exhaustive sequence in~\( \mathbb{ T }  \), then \( \lim_{ m \to \infty  } \bm{ O } _x^{ \delta  } ( \{ t_j \} _{ j \leq m } ) = \bm{ O } _x ^{ \delta  } ( \mathbb{ T } )   \).
    \item If for all~\( s \in \mathbb{ S }  \) and all~\( t \in \mathbb{ T }  \) it holds that \( s \leq t \), then \( \bm{ O } _x ^{ \delta  } ( \mathbb{ S } \cup \mathbb{ T } ) \leq \bm{ O } _{ x } ^{ \delta  } ( \mathbb{ S } ) + \bm{ O } _x ^{ \delta  } ( \mathbb{ T } ) + 1    .\) \label{Properties of oscillation counter 3)}
  \end{enumerate}
\end{corollary}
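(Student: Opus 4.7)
The plan is to prove the three assertions separately, in each case reducing to the finite case via the supremum definition~(\ref{Oscillation counter 2}).

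For 1), I would observe that any strictly increasing finite witness $\tau_0 < \dots < \tau_m$ in~$\mathbb{S}$ is, because $\mathbb{S} \subseteq \mathbb{T}$, also a finite witness in~$\mathbb{T}$; the increments $x^{\tau_{j-1}}_{\tau_j}$ are identical. The finite case is therefore immediate from~(\ref{Oscillation counter 1}), and the countable case follows because the supremum in~(\ref{Oscillation counter 2}) defining $\bm{O}_x^\delta(\mathbb{T})$ is taken over a larger family of finite subsets than the one defining $\bm{O}_x^\delta(\mathbb{S})$.

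For 2), I would combine 1) with exhaustiveness. The sequence $m \mapsto \bm{O}_x^\delta(\{t_j\}_{j \leq m})$ is nondecreasing by 1) and bounded above by $\bm{O}_x^\delta(\mathbb{T})$, so its limit exists in the extended non-negative integers and is at most $\bm{O}_x^\delta(\mathbb{T})$. For the reverse inequality, every finite $\mathbb{T}' \subseteq \mathbb{T}$ is contained in $\{t_j\}_{j \leq m}$ for sufficiently large~$m$ by exhaustiveness, hence $\bm{O}_x^\delta(\mathbb{T}') \leq \bm{O}_x^\delta(\{t_j\}_{j \leq m}) \leq \lim_{m \to \infty} \bm{O}_x^\delta(\{t_j\}_{j \leq m})$; taking the supremum over such $\mathbb{T}'$ closes the argument.

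For 3), the main idea is that the order hypothesis forces a clean splitting of any finite witness. Given $\tau_0 < \dots < \tau_m$ in $\mathbb{S} \cup \mathbb{T}$ realising $m$ oscillations, let $k$ be the largest index with $\tau_k \in \mathbb{S}$, with the convention $k = -1$ if no such index exists. The hypothesis that every element of~$\mathbb{S}$ precedes every element of~$\mathbb{T}$ forces $\tau_0, \dots, \tau_k \in \mathbb{S}$ and $\tau_{k+1}, \dots, \tau_m \in \mathbb{T}$. The initial segment witnesses $k$ oscillations in~$\mathbb{S}$, the terminal segment witnesses $m - k - 1$ oscillations in~$\mathbb{T}$, and the single straddling increment $\tau_k \to \tau_{k+1}$ accounts for the remaining "$+1$"; the edge cases $k = -1$ or $k = m$ give a strictly better bound without the extra term. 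Lifting to countable~$\mathbb{S}, \mathbb{T}$ is routine: apply the finite case to $\mathbb{V} \cap \mathbb{S}$ and $\mathbb{V} \cap \mathbb{T}$ for any finite $\mathbb{V} \subseteq \mathbb{S} \cup \mathbb{T}$, bound by $\bm{O}_x^\delta(\mathbb{S}) + \bm{O}_x^\delta(\mathbb{T}) + 1$ using~(\ref{Oscillation counter 2}), and take the supremum over~$\mathbb{V}$.

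The only step requiring genuine attention is the book-keeping in 3), namely checking that the straddling increment contributes exactly one to the total and cannot be absorbed into the count of the prefix or suffix; everything else is a direct reading of the definitions~(\ref{Oscillation counter 1}) and~(\ref{Oscillation counter 2}).
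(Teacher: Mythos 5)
Your argument is correct: the paper states this corollary without proof (dismissing it as a list of straightforward properties), and your reductions to the finite case via the supremum definition, together with the splitting of a witness $\tau_0<\dots<\tau_m$ at the last index lying in $\mathbb{S}$ to account for the extra $+1$ in part 3), are exactly the intended details. Nothing to add.
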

In order to show that there exists a c\`adl\`ag modification, we have to prove that the random variable~\( \bm{ O } _x^{ \delta  } ( \mathbb{ Q } \cap [0, T ])\) is finite almost surely for all sufficiently small~\( \delta >0 \) and all~\( T > 0 \). One way to guarantee a random variable is finite almost surely is to show that its expectation is finite.
\begin{lemma}
  \label{Expectation bound for oscillations}
   For~\( n \in \mathbb{ N }  \), let \( x_1, \dots, x_n \) be independent random elements in~\( \mathscr{G}  \). Define \( x^{ j } _{ k } = x_{ j+1 } \cdots x_k \) for~\( j \leq k \leq n \). Suppose that \( \mathbb{ P } ( \exists_{ j \leq k \leq n}:\, x^{ j } _k \notin U_{ \delta  } ) \leq \alpha < 1 \) for some~\( \delta \in  (0, \rho ') \). Then
  \begin{equation*}
    \mathbb{ E } {\left( \bm{ O } _x^{ \delta   }  ( \{ 1, \dots, n \} )\right)} \leq \frac{ \alpha  }{ 1- \alpha  } .
  \end{equation*}
\end{lemma}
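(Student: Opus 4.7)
The plan is to prove the geometric tail bound
\[
\mathbb{P}(\bm{O}_x^\delta(\{1,\dots,n\}) \geq m) \leq \alpha^m \qquad \text{for every } m \geq 1,
\]
and then read off the conclusion from the tail-sum identity $\mathbb{E}(\bm{O}_x^\delta(\{1,\dots,n\})) = \sum_{m \geq 1} \mathbb{P}(\bm{O}_x^\delta(\{1,\dots,n\}) \geq m)$, which upon summing the geometric series yields $\alpha/(1-\alpha)$. To this end, I would introduce auxiliary $\{0,1,\dots,n,+\infty\}$-valued stopping times with respect to the filtration generated by $x_1, \dots, x_n$: set $\sigma_0 := 0$ and recursively
\[
\sigma_i := \min\bigl\{k \in \{\sigma_{i-1}+1, \dots, n\} :\, \exists\, j \in \{\sigma_{i-1}, \dots, k-1\} \text{ with } x^j_k \notin U_\delta \bigr\},
\]
with the convention $\sigma_i := +\infty$ when the defining set is empty.

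The tail bound then follows in two steps. For the first step, suppose $\bm{O}_x^\delta(\{1,\dots,n\}) \geq m$, so by definition there exist $\tau_0 < \tau_1 < \dots < \tau_m$ in $\{1, \dots, n\}$ with $x^{\tau_{i-1}}_{\tau_i} \notin U_\delta$ for each $i$. A straightforward induction on $i$ gives $\sigma_i \leq \tau_i$ for $i = 1, \dots, m$: indeed, if $\sigma_{i-1} \leq \tau_{i-1}$, then $(\tau_{i-1}, \tau_i)$ is a valid candidate in the minimum defining $\sigma_i$, so $\sigma_i \leq \tau_i$. In particular $\sigma_m \leq \tau_m \leq n$, and hence $\mathbb{P}(\bm{O}_x^\delta(\{1,\dots,n\}) \geq m) \leq \mathbb{P}(\sigma_m \leq n)$.

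For the second step, I would prove $\mathbb{P}(\sigma_m \leq n) \leq \alpha^m$ by induction on $m$. Decomposing
\[
\mathbb{P}(\sigma_m \leq n) = \sum_{s=0}^{n-1} \mathbb{P}(\sigma_{m-1} = s,\, \sigma_m \leq n),
\]
note that on $\{\sigma_{m-1} = s\}$ the event $\{\sigma_m \leq n\}$ coincides with $E_s := \{\exists\, s \leq j < k \leq n,\, x^j_k \notin U_\delta\}$, which is measurable with respect to $\sigma(x_{s+1}, \dots, x_n)$ and is contained in the hypothesis event, so $\mathbb{P}(E_s) \leq \alpha$. Since $\{\sigma_{m-1} = s\} \in \sigma(x_1, \dots, x_s)$ and these two $\sigma$-algebras are independent, the factorisation yields $\mathbb{P}(\sigma_m \leq n) \leq \alpha \cdot \mathbb{P}(\sigma_{m-1} \leq n)$, closing the induction against the base case $\mathbb{P}(\sigma_1 \leq n) \leq \alpha$, which is direct from the hypothesis.

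The main subtlety, and the real content of the argument, is the calibration of the stopping times: the constraint $j \geq \sigma_{i-1}$ in the definition of $\sigma_i$ is essential on both sides. It is what allows the chain $(\tau_i)$ to furnish a valid candidate in the minimum (so that $\sigma_i \leq \tau_i$ holds), and at the same time it is what ensures that the continuation event $\{\sigma_m \leq n\} \cap \{\sigma_{m-1} = s\}$ is measurable with respect to the independent block $\sigma(x_{s+1}, \dots, x_n)$. Once the stopping times are tuned with this balance, the remainder is a routine conditioning-and-independence computation.
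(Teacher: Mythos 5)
Your proof is correct, and it follows the same overall strategy as the paper: establish the geometric tail bound $\mathbb{P}\bigl(\bm{O}_x^{\delta}(\{1,\dots,n\})\geq m\bigr)\leq\alpha^{m}$ via a one-step recursion that splits the index set at a random location and exploits independence of the two blocks, then conclude by the tail-sum formula. The genuine difference lies in which end of the oscillation chain carries the conditioning. The paper decomposes $\{\bm{O}_x^{\delta}(\mathbb{T})\geq m+1\}$ according to the starting index $j$ of the \emph{last} oscillation, characterised as the largest $j$ with $\bm{O}_x^{\delta}(\mathbb{T}^{j}_{n})=1$ and $\bm{O}_x^{\delta}(\mathbb{T}^{k}_{n})=0$ for all $k>j$; the event $\{\bm{O}_x^{\delta}(\mathbb{T}^{1}_{j})\geq m\}$ then sits in the past block and the single remaining oscillation in the future block. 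You decompose instead according to the completion time $\sigma_{m-1}$ of the first $m-1$ oscillations, placing the ``one more oscillation'' event in the future block --- a mirror image of the paper's split, formalised through first-passage stopping times rather than through an identity of events. Your formulation buys a slightly cleaner bookkeeping: the measurability of $\{\sigma_{m-1}=s\}$ with respect to $\sigma(x_{1},\dots,x_{s})$ and of the continuation event $E_{s}$ with respect to $\sigma(x_{s+1},\dots,x_{n})$ is immediate from the recursive definition, whereas the paper must verify a somewhat delicate disjoint-union identity for the ``last oscillation'' events before independence can be applied. Both routes yield the identical recursion $\mathbb{P}(\,\cdot\geq m\,)\leq\alpha\,\mathbb{P}(\,\cdot\geq m-1\,)$ and the same constant $\alpha/(1-\alpha)$.
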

\begin{proof}
  Denote \( \mathbb{ T } : = \{1,\dots,n\} \) and \( \mathbb{ T } ^{ j } _k = \{ j, \dots, k\} \). It is evident that the following inclusion of events holds for~\( m \in \mathbb{ N }  \):
  \begin{align}
      &{\left\{ \bm{ O } _x ^{ \delta  } ( \mathbb{ T } ) \geq m+1 \right\}} \nonumber\\
      & \qquad \subseteq  {\left\{  \exists _{ j < n } : \bm{ O } _x^{ \delta  } ( \mathbb{ T } ^{ 1 } _j ) \geq m, \,  \bm{ O } _x^{ \delta  } (  \mathbb{ T } ^{ j } _n ) = 1, \forall _{ k > j }: \bm{ O } _x^{ \delta  }  ( \mathbb{ T } ^k_n ) = 0 \right\}} \label{Expectation bound for oscillations (1)}.
  \end{align}
  It is further evident that the following equality of the events holds as well:
  \begin{equation}
    \{ \bm{ O }_x ^{ \delta  } ( \mathbb{ T } ) \geq 1 \} = \bigcup _{ j < n } {\left\{ \bm{ O } _x ^{ \delta  } ( \mathbb{ T } ^{ j } _n)  = 1, \forall _{ k > j } : \bm{ O } _x^{ \delta  } (\mathbb{ T } ^{ k } _n )  = 0 \right\}}  \label{Expectation bound for oscillations (3)}.
  \end{equation}
  We find therefore
  \begin{align}
      & \mathbb{ P } {\left( \bm{ O } _x ^{  \delta  } ( \mathbb{ T } ) \geq m+ 1\right)} \nonumber\\
      & \quad \overset{ \mathclap{(\ref{Expectation bound for oscillations (1)})} }{ \leq } \, \, \mathbb{ P } {\left(  \exists _{ j < n } : \bm{ O } _x^{ \delta  } ( \mathbb{ T } ^{ 1 } _j ) \geq m, \, \bm{ O } _x^{ \delta  } ( \mathbb{ T } ^{ j } _n ) = 1, \forall _{ k > j }: \bm{ O } _x ^{ \delta  } ( \mathbb{ T } ^{ k } _n ) = 0 \right)} \nonumber\\
      & \quad \leq \sum_{ j < n }  \mathbb{ P } {\left(  \bm{ O } _x^{ \delta  } ( \mathbb{ T } ^{ 1 } _j ) \geq m \right)} \cdot \mathbb{ P } {\left( \bm{ O } _x^{ \delta  } ( \mathbb{ T } ^{ j } _n ) = 1, \forall _{ k > j }: \bm{ O } _x ^{ \delta  } ( \mathbb{ T } ^{ k } _n ) = 0 \right)} \nonumber\\
      & \quad \overset{\mathclap{\text{(i)}}}{ \leq } \,  \mathbb{ P } {\left(  \bm{ O } _x^{ \delta  } ( \mathbb{ T }  ) \geq m \right)} \cdot \sum_{ j < n }   \mathbb{ P } {\left( \bm{ O } _x^{ \delta  } ( \mathbb{ T } ^{ j } _n ) = 1, \forall _{ k > j }: \bm{ O } _x ^{ \delta  } ( \mathbb{ T } ^{ k } _n ) = 0 \right)} \nonumber\\
      & \quad  \overset{ \mathclap{(\ref{Expectation bound for oscillations (3)})} }{ = }\, \,  \mathbb{ P } {\left(  \bm{ O } _x^{ \delta  } ( \mathbb{ T }  ) \geq m \right)} \cdot  \mathbb{ P } {\left(  \bm{ O }_x ^{ \delta  } ( \mathbb{ T } ) \geq 1 \right)},\label{Expectation bound for oscillations (4)}
  \end{align}
  where (i) holds as \( \bm{ O } _x ^{ \delta  } ( \mathbb{ T } ^{ 1 } _j ) \leq \bm{ O } _x ^{ \delta  } ( \mathbb{ T }  ) \), and noting that the right-hand side on (\ref{Expectation bound for oscillations (3)}) is a union of disjoint sets. 

  By recursion of (\ref{Expectation bound for oscillations (4)}), we find \( \mathbb{ P } {\left( \bm{ O } _x ^{  \delta  } ( \mathbb{ T } ) \geq m\right)} \leq \mathbb{ P } {\left( \bm{ O } _x ^{  \delta  } ( \mathbb{ T } ) \geq  1\right)}^{ m } \).
  Since \( \{\exists_{ j \leq k \leq n}:\, x^{ j } _k \notin U_{ \delta  } \}  = \{ \bm{ O } _{ x } ^{ \delta  } ( \mathbb{ T } ) \geq 1 \} \), we obtain using the assumption in the lemma,
  \begin{equation*}
    \mathbb{ E } (\bm{ O } _x^{ \delta   }  (\mathbb{ T } )) = \sum_{ m=1 } ^{ \infty  } \mathbb{ P } ( \bm{ O } _x^{ \delta   }  ( \mathbb{ T }) \geq m) \leq  \sum_{ m=1 } ^{ \infty  }  \alpha ^{ m } = \frac{ \alpha  }{ 1- \alpha  } . 
   \end{equation*}
\end{proof}
Our strategy to find a modification of~\( x \) with c\`adl\`ag paths is to show that the paths~\( t \mapsto x _t \) are \emph{regulated}, i.e., they have left and right limits almost surely. The following lemma gives an equivalence criterion for regulated functions in terms of oscillations. It is an immediate consequence of the fact that \( \{g U_{ \delta  } \}_{ g \in \mathscr{G} , \delta >0 }   \) induces the topology on~\( \mathscr{G}  \).
\begin{lemma}
  \label{Characterization of continuous functions}
  Let \( \mathbb{ T } \subseteq [0, \infty ) \) be countable. Suppose \( f: \mathbb{ T }  \to \mathscr{G}  \) is a map such that for each~\( \delta \in ( 0, \rho ') \) and~\( T >0 \), it holds \( \bm{ O } _f^{ \delta  } ( \mathbb{ T } \cap [0,T] ) < \infty \). Then \( f \) is regulated.
\end{lemma}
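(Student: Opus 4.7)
The plan is to argue by contradiction: assume $f$ is not regulated on $\mathbb{T}\cap[0,T]$, so either the left limit fails at some $t\in(0,T]$ or the right limit fails at some $t\in[0,T)$. By symmetry I only treat the former. The goal is to produce some $\delta'\in(0,\rho')$ together with a strictly increasing sequence $\tau_1<\tau_2<\cdots$ in $\mathbb{T}\cap[0,t)$ with $f(\tau_n)^{-1}f(\tau_{n+1})\notin U_{\delta'}$ for every $n$; Corollary~\ref{Properties of oscillation counter} then forces $\bm{O}_f^{\delta'}(\mathbb{T}\cap[0,T])=\infty$, contradicting the hypothesis.

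The first step is to show that the failure of the left limit at $t$ implies the existence of some $\delta_0\in(0,\rho')$ such that for every $\varepsilon>0$ one can find $s<s'$ in $(t-\varepsilon,t)\cap\mathbb{T}$ with $f(s)^{-1}f(s')\notin U_{\delta_0}$. I expect this to be the main analytic obstacle, since it amounts to a local Cauchy characterization of the limit. The argument proceeds by contraposition: if no such $\delta_0$ exists, then for every $\delta>0$ there is $\varepsilon_\delta>0$ such that $f(s)^{-1}f(s')\in U_\delta$ for all $s,s'\in(t-\varepsilon_\delta,t)\cap\mathbb{T}$. Fixing $s_0$ close enough to $t$, the element $X(s):=\log(f(s_0)^{-1}f(s))$ is well-defined in $B_{\rho'}$ for all $s\in[s_0,t)\cap\mathbb{T}$, and a BCH expansion of $\log(\exp(-X(s))\exp(X(s')))$ upgrades the multiplicative Cauchy condition for $f$ to a Cauchy condition for $(X(s))$ in the Banach space $\mathfrak{g}$; its limit $X$ then yields $f(s_0)\exp(X)$ as the left limit at $t$.

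Equipped with such a $\delta_0$, I would choose $\delta'\in(0,\delta_0)$ small enough that $U_{\delta'}^{2}\subseteq U_{\delta_0}$; this is possible by the relation
\[
  \lim_{\delta\to0}\sup_{\|X\|,\|Y\|<\delta}\|\log(\exp(X)\exp(Y))\|=0
\]
noted after~(\ref{BCH function}). The sequence $(\tau_n)$ is then built recursively. Pick $\tau_1\in\mathbb{T}\cap[0,t)$ arbitrarily, and given $\tau_n$ apply the witness of $\delta_0$ to $\varepsilon=\min(1/n,\,t-\tau_n)$ to obtain $s<s'$ in $(\max(\tau_n,t-1/n),t)\cap\mathbb{T}$ with $f(s)^{-1}f(s')\notin U_{\delta_0}$. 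From
\[
  f(s)^{-1}f(s')=\bigl(f(\tau_n)^{-1}f(s)\bigr)^{-1}\bigl(f(\tau_n)^{-1}f(s')\bigr)
\]
and Lemma~\ref{Local metric of Lie groups}, if both factors lay in $U_{\delta'}$ their product would lie in $U_{\delta'}^{2}\subseteq U_{\delta_0}$, contradicting $f(s)^{-1}f(s')\notin U_{\delta_0}$. Hence at least one factor lies outside $U_{\delta'}$; I set $\tau_{n+1}$ to the corresponding element of $\{s,s'\}$, so that $f(\tau_n)^{-1}f(\tau_{n+1})\notin U_{\delta'}$ and $\tau_{n+1}>\tau_n$. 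The resulting sequence realises arbitrarily long $\delta'$-oscillation chains inside $\mathbb{T}\cap[0,T]$, yielding the desired contradiction and proving that $f$ is regulated.
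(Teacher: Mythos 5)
Your proof is correct. There is nothing in the paper to compare it against line by line: the lemma is stated without proof, being declared ``an immediate consequence of the fact that \( \{gU_{\delta}\}_{g,\delta} \) induces the topology on \( \mathcal{G} \)'', and your write-up is precisely the standard argument that this one-line justification compresses. The contradiction scheme (failure of a one-sided limit at \( t \) yields a \( \delta_0 \) with pairs \( s<s' \) arbitrarily close to \( t \) satisfying \( f(s)^{-1}f(s')\notin U_{\delta_0} \), from which one recursively builds a chain of consecutive increments outside \( U_{\delta'} \) with \( U_{\delta'}^{2}\subseteq U_{\delta_0} \)) is exactly right, and the recursion is watertight: if both \( f(\tau_n)^{-1}f(s) \) and \( f(\tau_n)^{-1}f(s') \) lay in \( U_{\delta'} \), Lemma~\ref{Local metric of Lie groups} would place \( f(s)^{-1}f(s') \) in \( U_{\delta'}^{2} \). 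The one step that genuinely needs care---and which the paper's phrasing hides---is the Cauchy-to-limit implication: the topology being generated by the sets \( gU_{\delta} \) identifies what convergence means but does not produce the limit point; existence must come from completeness of \( \mathfrak{g} \) through the chart \( g\mapsto\log(f(s_0)^{-1}g) \), exactly as you propose. To make your BCH step airtight in infinite dimensions, note that continuity of the derivative of \( (X,Z)\mapsto\log(\exp(X)\exp(Z)) \) does not by itself give boundedness on bounded (non-compact) sets, so you should first fix \( \delta_1 \) small enough that the partial derivative in \( Z \) has norm at most \( 2 \), say, on \( B_{\delta_1}\times B_{\delta_1} \) (possible by continuity at \( (0,0) \), where this derivative is the identity), and only then choose \( s_0 \) so that \( f(s)^{-1}f(s')\in U_{\delta_1} \) for all \( s,s'\in(s_0,t)\cap\mathbb{T} \); the mean value inequality then gives \( \norm{X(s')-X(s)}\leq 2\,\norm{\log(f(s)^{-1}f(s'))} \), which is the Cauchy estimate you need.
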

Notice that for any open neighborhood~\( U \subseteq \mathscr{G}  \) containing the identity, there is \( \delta > 0 \) sufficiently small such that \( U_{ \delta  } = \operatorname{exp} (B _{ \delta  })  \) is contained in~\( U \). To make use of Lemma~\ref{Maximum oscillation bound} and Lemma~\ref{Largest step bound}, we would like to ensure \( U_{ \delta  } ^{ 2 }  \subseteq U \) as well for small~\( \delta  \). But this already follows from the smoothness of the function in (\ref{BCH function}), which implies \( \lim_{ \delta \to 0 }  \sup_{ \norm{ X } , \norm{ Y } < \delta  } \norm{ \operatorname{log} ( \exp ( X) \exp (Y) ) } = 0  \), i.e., for every~\( \epsilon  \) there is \( \delta > 0 \) such that \( \sup_{ \norm{ X } , \norm{ Y } < \delta  } \norm{ \operatorname{log} ( \exp ( X) \exp (Y) ) } < \epsilon   \). Choose \( \epsilon  \) such that \( U_{ \epsilon  } \subseteq U \) and \( \delta  \) appropriately. Then we obtain
\begin{equation}
  U_{ \delta  }^2 \subseteq U_{ \epsilon  } \subseteq U \label{BCH formula I}.
\end{equation}
In consequence, a process~\( x = (x_t)_t \) is stochastically continuous if and only if
\begin{equation}
  \lim _{ h \to  0 } \mathbb{ P } {\left( (x_t)^{ -1 } x _{ t+h } \in U_{ \delta  } ^{ 2 }  \right)} = 1 \quad \text{for each \( \delta \in (0, \rho '' ) \)} . \label{Stochastic continuity 2}
\end{equation}
We can now show the existence of a c\`adl\`ag modification.
\begin{theorem}
  \label{Regularization of multiplicative processes}
  Every~\( \mathscr{ G }  \)-valued multiplicative process~\( x = (x^{ s } _t)_{ s,t }  \) with respect to~\( (\mathscr{ F } ^{ s } _{ t } )_{ s,t }  \) has a modification~\( \overline{ x } = ( \overline{ x } ^{ s } _t ) _{ s \leq t }  \) of~\( x \) such that
    \begin{enumerate}[1)]
      \item the process~\( \overline{ x }  \) is c\`adl\`ag in both arguments (simultaneously),
      \item the process~\( \overline{ x }  \) is a multiplicative process with respect to~\( ( \mathscr{ F } ^{ s } _t ) _{ s \leq t }  \),
      \item for all~\( \omega \in \Omega  \) and~\( s \leq t \leq u \), it holds \( \overline{ x } ^{ s }_s( \omega ) = e   \) and \( \overline{ x } ^{ s } _u ( \omega ) = \overline{ x } ^{ s }_t ( \omega )  \overline{ x } ^{ t } _u ( \omega )   \).
    \end{enumerate}
\end{theorem}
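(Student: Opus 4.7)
The plan is to reduce to the one-parameter process $y_t := x^{0}_{t}$, perform the c\`adl\`ag regularization there using the oscillation machinery just developed, and then lift the result back to two parameters via the cocycle identity $x^{s}_{t} = y_{s}^{-1} y_{t}$ (which holds almost surely for each fixed $s \leq t$ by multiplicativity).

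The central step is to show that for every $T > 0$ and every sufficiently small $\delta > 0$, the random variable $\bm{O}_{x}^{\delta}(\mathbb{Q} \cap [0, T])$ is almost surely finite. Iterating the BCH estimate~(\ref{BCH formula I}), I first fix $\delta_1 \in (0, \delta)$ such that $U_{\delta_1}^{4} \subseteq U_{\delta}$. Applying Lemma~\ref{Uniform stochastic continuity} to $y$ with the open neighborhood $U_{\delta_1}$ produces a partition $0 = T_0 < T_1 < \dots < T_N = T$ whose mesh is smaller than the resulting $H$, so that $\mathbb{P}(x^{T_{k-1}}_{t} \notin U_{\delta_1}) \leq \alpha$ uniformly for $t \in [T_{k-1}, T_k]$, with $\alpha$ chosen small enough for the constants appearing below to stay controllable. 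On each subinterval I take an arbitrary finite grid $T_{k-1} = s_0 < s_1 < \dots < s_m = T_k$; the increments $x^{s_{j-1}}_{s_j}$ are independent by additivity of the filtration. Combining Lemma~\ref{Maximum oscillation bound} (applied at radius $\delta_1$) with Lemma~\ref{Largest step bound} (applied at radius $\delta_1$, producing $U^4_{\delta_1}$) and the inclusion $U^4_{\delta_1} \subseteq U_\delta$, I obtain
\begin{equation*}
\mathbb{P}\bigl(\exists_{i < j \leq m}:\, x^{s_i}_{s_j} \notin U_{\delta}\bigr) \;\leq\; \frac{\alpha}{1 - \alpha} \;=:\; \beta.
\end{equation*}
Lemma~\ref{Expectation bound for oscillations} then yields $\mathbb{E}\bigl[\bm{O}^{\delta}_x(\{s_0, \ldots, s_m\})\bigr] \leq \beta / (1 - \beta)$. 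Summing across the $N$ subintervals via Corollary~\ref{Properties of oscillation counter}~\ref{Properties of oscillation counter 3)}, taking an exhaustion of $\mathbb{Q} \cap [0, T]$, and combining Corollary~\ref{Properties of oscillation counter}~2) with monotone convergence, I reach $\mathbb{E}\bigl[\bm{O}^{\delta}_x(\mathbb{Q} \cap [0, T])\bigr] < \infty$. Lemma~\ref{Characterization of continuous functions} then shows that $y|_{\mathbb{Q}}$ is regulated on every bounded interval almost surely.

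On the event of full probability where this holds, I define $\bar{y}_t(\omega) := \lim_{\mathbb{Q} \ni q \downarrow t} y_q(\omega)$, and set $\bar{y}_t(\omega) := e$ on the exceptional null set. Right-continuity of this rational right-limit, together with existence of left limits from the regulated property, makes $\bar{y}$ c\`adl\`ag, while stochastic continuity of $y$ forces $\bar{y}_t = y_t$ almost surely for every fixed $t$; adaptedness to $\mathcal{F}^{0}_{t} = \mathcal{F}^{0}_{t+}$ follows from the usual hypotheses. I then set $\bar{x}^{s}_{t}(\omega) := \bar{y}_s(\omega)^{-1}\,\bar{y}_t(\omega)$. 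By construction $\bar{x}^{s}_{s} = e$ and the cocycle identity $\bar{x}^{s}_{u} = \bar{x}^{s}_{t}\,\bar{x}^{t}_{u}$ hold for \emph{every} $\omega$, and continuity of multiplication and inversion upgrades the c\`adl\`ag regularity of $\bar{y}$ to joint c\`adl\`ag regularity of $(s,t) \mapsto \bar{x}^{s}_{t}$ on $\{s \leq t\}$. Completeness of $\mathcal{F}^{s}_{t}$ finally ensures that $\bar{x}^{s}_{t}$, being almost surely equal to the $\mathcal{F}^{s}_{t}$-measurable $x^{s}_{t}$, is itself adapted, and stochastic continuity of $\bar{x}$ is inherited from $x$.

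The main technical obstacle I anticipate lies in the oscillation step: correctly orchestrating the nested BCH inclusions $U_{\delta_1} \subseteq U_{\delta_1}^{2} \subseteq U_{\delta_1}^{4} \subseteq U_{\delta}$ in tandem with the slightly asymmetric hypotheses of Lemmas~\ref{Maximum oscillation bound} and~\ref{Largest step bound}, and ensuring that the condition $\mathbb{P}(x^{T_{k-1}}_{s_j} \notin U_{\delta_1}) \leq \alpha$ required by Lemma~\ref{Maximum oscillation bound} holds \emph{uniformly in} $s_j$ along the subinterval. This uniformity is precisely what Lemma~\ref{Uniform stochastic continuity} delivers, as opposed to mere pointwise stochastic continuity, but it must be invoked with the correct radius before the partition $(T_k)$ is chosen.
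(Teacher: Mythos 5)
Your proposal is correct and takes essentially the same route as the paper: the identical chain of uniform stochastic continuity, Lemma~\ref{Maximum oscillation bound}, Lemma~\ref{Largest step bound}, and the iterated inclusion \( U_{\epsilon}^{4} \subseteq U_{\delta} \) to bound \( \mathbb{E}\bigl(\bm{O}_x^{\delta}(\mathbb{T} \cap [0,T])\bigr) \) via Lemma~\ref{Expectation bound for oscillations} on a partition of mesh \( H \), followed by defining \( \overline{x}_t \) as the right limit of the one-parameter process along a countable dense set and lifting to two parameters by \( \overline{x}^{s}_{t} = (\overline{x}_s)^{-1}\overline{x}_t \). The paper's proof is organized around exactly these two claims and the same modification argument via almost sure convergence of a subsequence of \( x^{s_n}_{t_n} \).
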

\begin{proof}
  We show all claims for~\( 0 \leq s \leq t \leq u \leq T \), where \( T \) is fixed. Let \( \mathbb{ T } \subseteq [0,T] \) be countable and dense.
  \begin{myclaims}
    \item\label{Regularization of multiplicative processes I} For all~\( \alpha > 0 \) and~\( \delta \in ( 0 , \rho ') \), there is \( H > 0 \) such that for all~\( t_0 \leq  \dots \leq  t_n  \in \mathbb{ T } \), where \( \abs{ t_n - t_0 } \leq H \), 
      \begin{equation*}
        \mathbb{ P } {\left( \exists _{ j \leq k \leq n } : x^{ t_j } _{ t_k } \notin U_{ \delta  }  \right)} \leq \alpha .
      \end{equation*} 
    \item [\textit{Proof.}] By uniform stochastic continuity (Lemma~\ref{Uniform stochastic continuity}), for all~\( \beta > 0 \) and~\( \epsilon \in (0 , \rho')  \), there is \( H> 0 \) such that 
      \begin{equation*}
        \sup_{ \substack{ \abs{ h } < H \\ t+H \leq  T }  } \mathbb{ P } {\left( x^{ t } _{ t+h } \notin U_{ \epsilon  }  \right)} \leq \beta .
      \end{equation*}
      Hence, for all~\( t_0 \leq \dots \leq t_n \in \mathbb{ T }  \) where \( \abs{ t_n - t_0 } < H \), we have \( \mathbb{ P } ( x ^{ t_0 } _{ t_n } \notin U_{ \epsilon  } ) \leq \beta  \). Thus, Lemma~\ref{Maximum oscillation bound} applies, and we find 
      \begin{equation*}
        {\left( 1- \beta  \right)} \mathbb{ P } {\left( \exists _{ j \leq n } : x^{ t_0 } _{ t_j } \notin U_{ \epsilon  } ^{ 2 }  \right)} \leq \beta.
      \end{equation*}
      Therefore, \( \mathbb{ P } {\left( \exists_{ j \leq n } : x^{ t_0 } _{ t_j } \notin U^{ 2 } _{ \epsilon  }  \right)} \leq \tfrac{ \beta  }{ 1- \beta  }  \). Thus, Lemma~\ref{Largest step bound} applies, and we find
      \begin{equation}
        \mathbb{ P } {\left( \exists _{ j \leq k \leq n } : x^{ t_j } _{ t_{ k }  } \notin U^{ 4 } _{ \epsilon  }  \right)} \leq \tfrac{ \beta  }{ 1 - \beta  } . \label{Regularization of multiplicative processes (1)}
      \end{equation}
      Thus, we have shown that (\ref{Regularization of multiplicative processes (1)}) holds for all~\( \beta > 0 \) and~\( \epsilon \in ( 0 , \rho ') \) and suitable~\( H = H( \beta , \epsilon ) \). By applying (\ref{BCH formula I}) twice, there is \( \epsilon _0  \) such that for all~\( \epsilon \leq \epsilon _0 \), we have \( U_{ \epsilon  } ^{ 4 } \subseteq  U_{ \delta  }    \). Hence, (\ref{Regularization of multiplicative processes (1)}) holds for such~\( \epsilon  \) and~\( \beta = \tfrac{ \alpha  }{ 1+  \alpha  }  \), i.e.,~\( \alpha  = \tfrac{ \beta  }{ 1 - \beta  }  \). Then
      \begin{equation*}
        \mathbb{ P } {\left( \exists _{ j \leq k \leq n } : x^{ t_j } _{ t_{ k }  } \notin U _{ \delta   }  \right)} \leq \alpha .
      \end{equation*}
    \item\label{Regularization of multiplicative processes II} For all~\( \delta \in ( 0 , \rho ') \), we have \( \mathbb{ E } {\left( \bm{ O } _x^{ \delta  } ( \mathbb{ T } ) \right)} < \infty  \).
    \item [\textit{Proof.}] Let \( ( s _ m)_m \subseteq \mathbb{ T }  \) be an exhaustive sequence in~\( \mathbb{ T }  \) and define \( \mathbb{ T } _m = \{ s_j \}_{ j \leq m }  \). By Claim~\ref{Regularization of multiplicative processes I}, for all~\( \alpha > 0 \), \( \delta \in (0 , \rho ') \), there is \( H > 0 \) such that for all~\( n \in \mathbb{ N }  \) and all~\( t_0 \leq \dots \leq t_n \in \mathbb{ T } _m\) with~\( \abs{ t_n - t_0 } < H \), we have
      \begin{equation*}
        \mathbb{ P } {\left( \exists _{ j \leq k \leq n } : x^{ t_j } _{ t_k } \notin U_{ \delta  }  \right)} \leq \alpha .
      \end{equation*}
      Choose \( \alpha \leq 1/2 \), \( \delta \in (0 , \rho ' ) \). Then there is \( N \in \mathbb{ N } \) such that we can choose an adequate~\( H = \tfrac{ T }{ N }  \). Then, for each~\( \ell = 1 , \dots , N \), and~\( t^{ \ell } _0 \leq \dots \leq t^{ \ell } _n \in \mathbb{ T } _m \cap [T \tfrac{ \ell-1 }{ N }  , T \tfrac{ \ell }{ N }  ]\), we have
      \begin{equation*}
        \mathbb{ P } {\left( \exists _{ j \leq k \leq n  } : x^{ t^{ \ell } _j } _{ t^{ \ell } _k } \notin U_{ \delta  }  \right)} \leq \alpha  .
      \end{equation*}
      By Lemma~\ref{Expectation bound for oscillations},
      \begin{equation}
        \mathbb{ E } {\left( \bm{ O } _x^{ \delta  } ( \{ t_1^{ \ell } , \dots , t_n ^{ \ell } \} ) \right)} \leq \tfrac{ \alpha  }{ 1 - \alpha  } \leq 1. \label{Regularization of multiplicative processes (2)}
      \end{equation}
      By assumption, \( \limsup_{ m \to \infty  } \mathbb{ T } _m = \mathbb{ T }  \), hence \( \limsup_{ m \to \infty  } \mathbb{ T } _m \cap [T \tfrac{ \ell-1 }{ N }  , T \tfrac{ \ell }{ N }  ] = \mathbb{ T } \cap [ T \tfrac{ \ell-1 }{ N }  , T \tfrac{ \ell }{ N } ] \). Therefore, by Corollary~\ref{Properties of oscillation counter}, \( \lim_{ m \to \infty   } \bm{ O } _x ^{ \delta  } ( \mathbb{ T } _m \cap [T \tfrac{ \ell-1 }{ N }  , T \tfrac{ \ell }{ N }  ] ) = \bm{ O } _x ^{ \delta  } ( \mathbb{ T } \cap [T \tfrac{ \ell-1 }{ N }  , T \tfrac{ \ell }{ N }  ] ) \) and this sequence is monotone. Thus, the monotone convergence theorem applies and we have
      \begin{equation*}
        \lim_{ m \to \infty  } \mathbb{ E } {\left( \bm{ O } _x ^{ \delta  } ( \mathbb{ T } _m \cap [T \tfrac{ \ell-1 }{ N }  , T \tfrac{ \ell }{ N }  ] )  \right)} = \mathbb{ E } {\left( \bm{ O } _x ^{ \delta  } ( \mathbb{ T } \cap [T \tfrac{ \ell-1 }{ N } , T \tfrac{ \ell }{ N } ] ) \right)}.
      \end{equation*}
      By (\ref{Regularization of multiplicative processes (2)}), the limit is bounded by 1. In total, we obtain with Corollary~\ref{Properties of oscillation counter},
      \begin{equation*}
        \mathbb{ E } {\left( \bm{ O } _x ^{ \delta  } ( \mathbb{ T } \cap [0,T] ) \right)} \leq \sum_{ \ell=1 } ^{ N } \mathbb{ E } {\left( \bm{ O } _x^{ \delta  } ( \mathbb{ T } \cap [T \tfrac{ \ell-1 }{ N }  , T \tfrac{ \ell }{ N } ]  \right)} + N \leq 2N < \infty.
      \end{equation*}
      This proves~Claim~\ref{Regularization of multiplicative processes II}.
  \end{myclaims}
  In particular, the number of oscillations larger than~\( \delta  \) is almost surely finite. Let \( \Omega_n ' \) be the set of~\( \omega \in \Omega  \) for which the number of oscillations larger than~\( 1/n \) is finite. Then \( \mathbb{ P } {\left(  \Omega '_n \right)} =1\). By~\( \sigma  \)-additivity, the event~\( \Omega ' = \bigcap_{n=1}^{\infty } \Omega '_n \) is also almost sure. We may apply Lemma~\ref{Characterization of continuous functions} and use Claim~\ref{Regularization of multiplicative processes II} to define \( ( \overline{ x } _t ) _{ t }  \) by the rule
  \begin{equation*}
    \overline{ x }  _t {\left( \omega  \right)} =
      \begin{cases}
        \lim_{t_n  \searrow  t, \,   t_n \in \mathbb{ T }   } x _{ t_n }  ( \omega ), &  \omega \in \Omega ',\\
        e, & \omega \notin \Omega '
      \end{cases}
  \end{equation*}
  and \( \overline{ x }  \) by~\( \overline{ x } ^{ s } _t  = ( \overline{ x }  _s)^{ -1 } \overline{ x }_t\).

  To see that \( \overline{ x }  \) is a modification of~\( x \), consider first \( s<t \). For \( s_n, t_n \in \mathbb{ T }   \) with~\( s_n, t_n \searrow s,t \), we have \( x^{ s_n }_{ t_n } \to x^{ s } _t \) in probability. Thus, there is a subsequence that converges almost surely to~\( x^{ s } _t \). By definition, \( x^{ s_n } _{ t_n } \to \overline{ x } ^{ s } _t \) almost surely as well. Therefore, \( x^{ s } _t = \overline{ x } ^{ s } _t  \) almost surely. Second, if \( s=t \), then similarly \( x^{ s } _s = e  \) almost surely and \( \overline{ x } ^{ s } _s = e \) (surely). Thus, \( x^{ s } _s = \overline{ x } ^{ s } _s \) almost surely. 

  The enumerated assertions follow.
\end{proof}
C\`adl\`ag functions are determined by their values on a dense set of the domain. We can use this fact to show an even stronger notion of continuity for multiplicative processes. 
\begin{corollary}
  \label{Stochastic uniform continuity}
   Let \( x \) be a multiplicative process in~\( \mathscr{G}  \) with c\`adl\`ag paths and \( T > 0 \). Then, for all~\( \alpha > 0 \) and \( \delta \in ( 0, \rho ') \), there is \( H > 0 \) such that for all~\( r<u < T \) with~\( \abs{ u-r }<H  \), we have
  \begin{equation*}
    \mathbb{ P } ( \exists_{ r<s<t < u }:\,  x^{ s } _t \notin U_{ \delta  }   ) \leq \alpha .
  \end{equation*}
\end{corollary}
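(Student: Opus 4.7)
The plan is to combine the strengthened form of Claim~\ref{Regularization of multiplicative processes I}---which bounds oscillations on finite collections of times---with a right-continuity-and-multiplicativity argument that lifts the estimate from a countable dense time grid to all of $(r, u)$.

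I would first fix $\delta_0 \in (0, \rho' \wedge \delta)$ so small that $U_{\delta_0}^3 \subseteq U_\delta$; this is possible by applying the BCH estimate~(\ref{BCH formula I}) twice. Inspection of the proof of Claim~\ref{Regularization of multiplicative processes I} shows that its only inputs are uniform stochastic continuity (Lemma~\ref{Uniform stochastic continuity}) and the generic Lemmas~\ref{Maximum oscillation bound} and~\ref{Largest step bound}; none of these require the times to be rational, so the claim applies verbatim to arbitrary finite sequences of real times in $[0, T]$. Applying it with parameters $(\alpha, \delta_0)$ yields $H > 0$ such that for every $r < u \leq T$ with $|u - r| < H$ and every finite $t_0 \leq \cdots \leq t_n \in [r, u]$,
\begin{equation*}
  \mathbb{P}\!\left(\exists\, j \leq k \leq n :\, x^{t_j}_{t_k} \notin U_{\delta_0}\right) \leq \alpha.
\end{equation*}
Taking a countable dense set $\mathbb{T}_0 \subseteq (r, u)$ and writing the event $E := \{\exists\, s \leq t \in \mathbb{T}_0 :\, x^s_t \notin U_{\delta_0}\}$ as a monotone countable union of those bounded events over finite subsets of $\mathbb{T}_0$ then gives $\mathbb{P}(E) \leq \alpha$.

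The key step is the inclusion $\{\exists\, r < s < t < u :\, x^s_t \notin U_\delta\} \subseteq E$, which I would prove by contrapositive. On $E^c$ and for fixed $r < s < t < u$, pick $s_n \in \mathbb{T}_0 \cap (s, t)$ and $t_n \in \mathbb{T}_0 \cap (t, u)$ with $s_n \searrow s$ and $t_n \searrow t$; by definition of $E^c$ we have $x^{s_n}_{t_n} \in U_{\delta_0}$. Right-continuity of the càdlàg paths at $s$ and $t$ sends $x^s_{s_n}, x^t_{t_n} \to e$, so for $n$ large both---and hence by Lemma~\ref{Local metric of Lie groups}~1) also $(x^t_{t_n})^{-1}$---lie in $U_{\delta_0}$. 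Pointwise multiplicativity of the càdlàg modification then yields
\begin{equation*}
  x^s_t = x^s_{s_n} \cdot x^{s_n}_{t_n} \cdot (x^t_{t_n})^{-1} \in U_{\delta_0}^3 \subseteq U_\delta,
\end{equation*}
so $\omega$ lies outside the corollary's event, proving the inclusion.

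The main obstacle I anticipate is exactly this last step: càdlàg convergence $x^{s_n}_{t_n} \to x^s_t$ alone does not guarantee that any approximant avoids a prescribed neighborhood strictly smaller than the one avoided by the limit, because in infinite dimensions $\overline{U_{\delta_0}}$ need not sit inside $U_\delta$. The shrink-and-patch device---pre-shrinking $\delta$ to $\delta_0$ with $U_{\delta_0}^3 \subseteq U_\delta$ and rebuilding $x^s_t$ as a triple product whose outer factors are made small by right-continuity and whose middle factor is controlled by the grid estimate---is what sidesteps this issue.
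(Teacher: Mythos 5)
Your proposal is correct and follows the same route as the paper: Claim~\ref{Regularization of multiplicative processes I} gives the bound on finite time grids, continuity of probability measures lifts it to a countable dense grid, and the c\`adl\`ag property transfers it to all of $(r,u)$. The only difference is that the paper compresses the last step into the remark that $(x^s_t)_{s\le t\in[r,u]}$ is determined by its values on the dense grid, whereas you make this precise via the pre-shrink $U_{\delta_0}^3\subseteq U_\delta$ and the triple-product decomposition $x^s_t=x^s_{s_n}\,x^{s_n}_{t_n}\,(x^t_{t_n})^{-1}$ --- a legitimate and worthwhile tightening, since a right-limit of points of $U_{\delta_0}$ need not lie in $U_{\delta_0}$.
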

\begin{proof}
  By Claim~\ref{Regularization of multiplicative processes I} in the proof of Theorem~\ref{Regularization of multiplicative processes}, for~\( \alpha > 0 \) and \( \delta \in (0, \rho ' ) \), there is \( H > 0 \) such that for all~\( \{r=t_0 < \dots < t_n = r+H \} \),
  \begin{equation*}
    \mathbb{ P } (\exists_{ j \leq k \leq n }:\, x^{ t_j } _{ t_k } \notin U_{ \delta  }  ) \leq \alpha .
  \end{equation*}
  Choose a dense sequence~\( (t_m)_m \subseteq [r,r+H] \) and define \( \mathbb{ T } _n = \{t_1, \dots, t_n \} \). Then, using continuity of probability measures,
  \begin{equation*}
    \mathbb{ P } (\exists_{ n,m \in \mathbb{ N }  }:\,  x^{ t_n } _{ t_m } \notin U_{ \delta  }  ) = \lim_{ n \to \infty  } \mathbb{ P } (\exists_{ j,k \leq n }:\, x^{ t_j } _{ t_k }  \notin U_{ \delta } ) \leq \alpha .
  \end{equation*}
  As \( x \) is assumed to have c\`adl\`ag paths, \( (x^{ s } _t)_{ s \leq t \in [r,u] }  \) is determined by the values of~\( (x^{ t_n } _{ t_m } )_{ n,m \in \mathbb{ N }  } \).
\end{proof}
Another consequence of the c\`adl\`ag property is that the d\'ebut theorem~\cite{bass2010measurability} applies. Thus, for closed sets \( A \subseteq \mathscr{ G } \), the hitting times~\( \tau^{ A } = \inf \{ t > 0 :\ x^{ 0 } _t \in A \}\) and \( \sigma ^A = \inf \{ t > 0 : x^{ t- } _{ t } \in A \}\) are stopping times with respect to the filtration \( ( \mathscr{ F } ^{ 0 } _t)_{ t }  \).

\subsection{Markov property of multiplicative processes}
We want to formally establish \( (x_t)_t \) as a Markov process and conclude the strong Markov property. Therefore, we define the probability measures~\( (\mathbb{ P } _{ (s,y) } ) _{ 0 \leq s < \infty  , y \in \mathscr{G}  }   \) by
\begin{equation*}
  \mathbb{ P } _{ (s,y) } ( x_{ t_1 }  \in A_1 , \dots, x_{ t_n } \in A_n ) = \mathbb{ P } ( y x^{ s } _{ t_1 }  \in A_1 , \dots ,y x^{ s } _{ t_n } \in A_1 ) 
\end{equation*}
for each~\( n \in \mathbb{ N } \), \( t_1, \dots,t_n \geq s  \) and~\( A_1,\dots,A_n \subseteq \mathscr{G} \) Borel-measurable. Under \( \mathbb{ P } _{ (s,y) }  \), the process \( (y^{ -1 } x_{ s+t } )_{ t \geq 0 }   \) becomes a multiplicative process, and we can assume that \( (x_t)_{ t \geq s}  \) has c\`adl\`ag paths.

It is clear that a multiplicative process~\( (x_t)_t \) is a time-inhomogeneous Markov process in the sense of Chapter~1.1 of~\cite{gikhman2004theory}. The process~\( (x_t)_t \) is also c\`adl\`ag. Therefore, by Remark~1 in Chapter~1.4 of~\cite{gikhman2004theory}, the process~\( (x_t)_t \) is a strong Markov process if the map
\begin{equation*}
  [0,t) \times \mathscr{G} \to \mathbb{ R } , \, (s,y) \mapsto \mathbb{ E }_{ (s,y) }  {\left( f(x_t ) \right)} 
\end{equation*}
is right-continuous in~\( s \) and continuous in~\( y \) for any bounded Borel-measurable function \( f: \mathscr{ G } \to \mathbb{ R }  \). This property is indeed satisfied as \(\mathbb{ E } _{ (s,y) } {\left( f(x_t ) \right)} = \mathbb{ E } {\left( f ( y x^{ s } _t ) \right)} \) almost surely. Therefore, we have the following result by Theorem~6 in Chapter~1.4 of~\cite{gikhman2004theory}.:

\begin{lemma}
  \label{Strong Markov property}
  Every multiplicative process with c\`adl\`ag paths is a strong Markov process and for any bounded Borel-measurable map~\( f: \mathscr{G}^m \to \mathbb{ R }  \), \( s \geq 0 \), any finite stopping time~\( \tau \) such that \( \tau \geq s \) almost surely, and any \( \mathscr{ F }^{ s }  _{ \tau  }  \)-measurable random variables \( \eta_{ 1 } ,\dots, \eta_{ m }  \geq \tau  \), it holds
    \begin{equation*}
    \mathbb{ E } _{ (s,x) } {\left( f( x_{ \eta_{ 1 }   }, \dots, x_{ \eta_{ m }  } ) \mid \mathscr{ F }^s _{ \tau  }  \right)}  = \mathbb{ E } _{ (\tau , x_{ \tau  })  } {\left( f( x_{ \eta_{ 1 }   }, \dots, x_{ \eta_{ m }  }  ) \right)} \qquad \text{for all \( x \in \mathscr{ G }  \)} .
  \end{equation*}
\end{lemma}

\subsection{Jumps of a multiplicative process}
We define \( \mathfrak{ B } _{ \epsilon  } \) consisting of Borel-measurable sets~\( A \subseteq \mathscr{ G }  \) such that \( A \cap \exp ( B_{ \epsilon  } ) = \emptyset  \). Thus, elements in~\( \mathfrak{ B } _{ \epsilon  }  \) have a distance of at least~\( \epsilon  \) from the identity~\( e \). Further, we denote \(\mathfrak{ B } _{ 0 } = \bigcup_{ \epsilon > 0 } \mathfrak{ B } _{ \epsilon  }  \).

If \( (x_t)_t  \) is a multiplicative process and \( A \in \mathfrak{ B } _{ 0 }  \), the hitting times \( \tau_{ 0 }^{ A }  = 0 \) and
\begin{equation*}
  \tau _{ n+1 }^{ A } = \inf \{ t > \tau_{ n } ^{ A }  : x^{ t- } _t \in A \},
\end{equation*}
are well-defined stopping times with respect to the filtration~\( (\mathscr{ F } ^{ 0 } _t)_{ t }  \). Since \( (x_t)_t \) has c\`adl\`ag paths, on every interval~\( [0,t] \), there are almost surely only finitely many occurrences~\( x^{ s- } _s  \in A\),~\( s \leq t \). As a result, \( \lim_{ n \to \infty  } \tau_{ n }^{ A }  = \infty  \) almost surely. We define the jump measure~\( \nu _t (A) \) by the rule
\begin{equation*}
  \nu_{ t } (A) = \sum_{ n=1 } ^{ \infty  } \mathbb{ I } ( \tau^{ A } _{ n } \leq t ). 
\end{equation*}
\begin{lemma}
  For~\( A \in \mathfrak{ B } _0  \), the process~\( ( \nu_{ t } (A) )_t\) is a Poisson process, i.e., there is a function~\( \lambda : [0, \infty ) \to [0, \infty ) \) which is non-decreasing such that 
  \begin{enumerate}[1)]
    \item \( \nu_{ 0 } (A) = 0 \) almost surely,
    \item the random variables \( \nu_{ t_1 } (A) - \nu_{ t_0 } (A) , \dots, \nu_{ t_{ n }  } (A) - \nu_{ t_{ n-1 }  } (A) \) are independent for \( t_0 < \dots < t_n \),
    \item for~\( s < t \), it holds \( \nu_{ t } (A) - \nu_{ s } (A) \sim \operatorname{Pois}{\left( \lambda( t ) - \lambda( s )  \right)} \).
  \end{enumerate}
  If, in addition, \( (x_t)_t \) has stationary increments, i.e., the law of~\( x^{ s } _t \) only depends on~\( t-s \), then \( (\nu_{ t } (A))_t \) is a L\'evy process. Moreover, if \( A \subseteq \operatorname{dom} ( \operatorname{log} ) \), then the~\( \mathfrak{g} \)-valued process~\( (X_t)_t \) defined by 
  \begin{equation*}
    X_t = \sum_{ n=1 } ^{ \infty  } \operatorname{log}  (x_{ \tau ^{ A } _{ n } - } )^{ -1 }  x_{ \tau ^{ A } _{ n }  } ) \cdot \mathbb{ I } {\bigl( \tau ^{ A } _{ n } \leq t \bigr)} , 
  \end{equation*}
  is an additive process.
\end{lemma}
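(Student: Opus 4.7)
My plan is to verify three things in turn: that the nondecreasing, integer-valued process \( (\nu_t(A))_t \) has independent increments, that it is stochastically continuous, and that it takes finite values a.s.\ on bounded intervals. Once these hold, the classical characterization of Poisson processes (see, e.g.,~\cite{gikhman2004theory,ito2013stochastic}) supplies a nondecreasing continuous intensity \( \lambda:[0,\infty)\to[0,\infty) \) with \( \nu_t(A)-\nu_s(A)\sim\operatorname{Pois}(\lambda(t)-\lambda(s)) \). In the stationary-increments case the distribution of \( x^{s}_{s+h} \) does not depend on \( s \), which forces \( \lambda(t)-\lambda(s) \) to depend only on \( t-s \); hence \( \lambda \) is additive and continuous, therefore linear, and \( (\nu_t(A))_t \) is a homogeneous Poisson process and hence Lévy.

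The independent-increments step rests on the identity \( x^{r-}_r=(x^{s}_{r-})^{-1}x^{s}_r \) for every \( s<r\leq t \), which follows from multiplicativity of \( x \) and the càdlàg regularity of Theorem~\ref{Regularization of multiplicative processes}. It makes the indicator \( \mathbb{I}(x^{r-}_r\in A) \), and so the increment \( \nu_t(A)-\nu_s(A) \), a measurable functional of the two-parameter process restricted to \( \{(u,v):s\leq u\leq v\leq t\} \); this is \( \mathcal{F}^{s}_{t} \)-measurable, and independence across disjoint intervals drops out of the additive-filtration axiom. The initial value \( \nu_0(A)=0 \) a.s.\ holds because \( A\cap U_\epsilon=\emptyset \) for some \( \epsilon>0 \) and \( x^{0}_{0}=e \), while a.s.\ finiteness of \( \nu_t(A) \) on bounded intervals is the content of the sentence preceding the lemma.

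The main technical point I expect is the stochastic continuity of \( \nu \). Fix \( A\in\mathfrak{B}_\epsilon \); I would use the local diffeomorphism \( \exp:B_{\rho'}\to U_{\mathcal{G}} \) together with the regularity of the topology on \( \mathcal{G} \) to choose \( \delta\in(0,\epsilon) \) so small that \( \overline{U_\delta}\subseteq U_\epsilon \). If a jump \( x^{r^{\ast}-}_{r^{\ast}}\in A \) occurs at \( r^{\ast}\in(t,t+h] \), then \( x^{u}_{r^{\ast}}\to x^{r^{\ast}-}_{r^{\ast}}\in A\subseteq\mathcal{G}\setminus\overline{U_\delta} \) as \( u\uparrow r^{\ast} \), so some \( u\in(t,r^{\ast}) \) satisfies \( x^{u}_{r^{\ast}}\notin U_\delta \); if the jump sits at \( r^{\ast}=t+h \), applying Corollary~\ref{Stochastic uniform continuity} on the slightly enlarged interval \( (t,t+h+\eta) \) and letting \( \eta\downarrow 0 \) closes this boundary case. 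The event \( \{\nu_{t+h}(A)-\nu_t(A)\geq 1\} \) is thus contained in the event whose probability Corollary~\ref{Stochastic uniform continuity} pushes below any prescribed \( \alpha \), giving \( \mathbb{P}(\nu_{t+h}(A)-\nu_t(A)\geq 1)\to 0 \) as \( h\to 0 \). For the additive-process assertion about \( (X_t)_t \), the proof is routine: \( X_0=0 \) by construction, independence of increments follows by applying the measurability argument above to each summand \( \log((x_{\tau_n^{A}-})^{-1}x_{\tau_n^{A}}) \), and the identity \( X_{t+h}=X_t \) on \( \{\nu_{t+h}(A)=\nu_t(A)\} \) reduces stochastic continuity of \( X \) to that of \( \nu \).
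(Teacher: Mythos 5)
Your proposal is correct and follows essentially the same route as the paper: both reduce the claim to showing that \( (\nu_t(A))_t \) is an additive counting process --- increment measurability obtained by expressing the jumps in \( (s,t] \) through the two-parameter process restricted to \( \{(u,v): s\leq u\leq v\leq t\} \) (the paper does this via the shifted process \( \tilde{x}^{r}_{t}=x^{s+r}_{s+t} \), you via \( x^{r-}_{r}=(x^{s}_{r-})^{-1}x^{s}_{r} \)), independence from the additive-filtration axiom, and stochastic continuity via Corollary~\ref{Stochastic uniform continuity} --- and then invoke the classical characterization of Poisson processes. The one condition you should add to your list before citing that characterization is the unit-jump property (immediate here since the \( \tau^{A}_{n} \) are strictly increasing, and stated explicitly in the paper), because nondecreasing, integer-valued, a.s.\ finite, stochastically continuous processes with independent increments need not be Poisson without it (e.g.\ \( 2N_t \) for a Poisson process \( N \)).
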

\begin{proof}
  Clearly, \( ( \nu _t(A))_t \) has piecewise constant paths because \( (x_t)_t \) has c\`adl\`ag paths. The jumps of~\( (\nu _t(A))_t \) have a height of~\( 1 \) because \( \tau_{ n } ^{ A } < \tau_{ n+1 }^A \). By Theorem~1 in Chapter 1.1.4 of~\cite{ito2013stochastic}, it is sufficient to show that \( ( \nu_{ t } (A) )_t \) is an additive process. We have 
  \begin{equation}
    \nu_{ t } (A) - \nu_{ s } (A) = \sum_{ n=1 } ^{ \infty  } \mathbb{ I } ( \tau ^{ A } _{ n } \leq t ) - \mathbb{ I } ( \tau ^{ A } _{ n } \leq s ) = \sum_{ n=1 } ^{ \infty  } \mathbb{ I } ( s < \tau ^{ A } _{ n } \leq t). \label{Jump process is additive}
  \end{equation}
  Define the random integer~\( N = \inf \{ k \in \mathbb{ N } : \tau^{ A } _{ k } > s \} \). We claim that for all~\( n \in \mathbb{ N }  \), it holds \( \{ s < \tau^{ A } _{ (N-1)+n } \leq t \} \in \mathscr{ F } ^{ s } _t \). To see this, we define~\( ( \tilde{ x } ^{ r } _t ) _{ r,t } := (x^{ s+r } _{ s+t })_{ r,t }   \). This process is multiplicative with respect to~\( ( \tilde{ \mathscr{ F }  } ^{ r } _t)_{ r,t } := ( \mathscr{ F } ^{ s+r } _{ s+t } )_{ r,t }  \). Thus, the hitting times~\( \tilde{ \tau  } _{ 0 } := 0 \) and \( \tilde{ \tau  }^{ A }  _{ n+1 } := \inf \{ t > \tilde{ \tau  }^{ A }  _n : \tilde{ x } ^{ t- } _t \in A \} \) are stopping times with respect to \( ( \tilde{ \mathscr{ F }  } ^{ 0 } _t)_{ t } \), i.e., \( \{ 0 < \tilde{ \tau  }^{ A }  _n \leq t-s \} \in \tilde{ \mathscr{ F }  } ^{ 0 } _{ t-s }  \), implying
  \begin{equation*}
    \{ s < \tau_{ (N-1)+n }^{ A }  \leq t \} \in \mathscr{ F } ^{ s } _t,
  \end{equation*}
  because \( \{ s < \tau_{ (N-1)+n } ^{ A } \leq t \} = \{ 0 < \tilde{ \tau  }^{ A }  _n \leq t-r \} \) and \( \tilde{ \mathscr{ F }  } ^{ 0 } _{ t-s } = \mathscr{ F } ^{ s } _t \). 

  We continue showing that \( (\nu _t(A) )_t \) is additive. Using (\ref{Jump process is additive}) and the fact that \( \mathbb{ I } ( s < \tau ^{ A } _{ n } \leq t) = 0 \) if \( n<N \) , we obtain
  \begin{equation*}
    \nu_{ t } (A) - \nu_{ s } (A) = \sum_{ n=1 } ^{ \infty  } \mathbb{ I } ( s < \tau ^{ A } _{ n } \leq t) = \sum_{ n=N } ^{ \infty  } \mathbb{ I } ( s < \tau ^{ A } _{ n } \leq t)  = \sum_{ n=1 } ^{ \infty  } \mathbb{ I } ( s < \tau ^{ A } _{ (N-1) +n } \leq t) .
  \end{equation*}
  Hence, the random variable~\( \nu _t (A) - \nu_{ s } (A) \) is \( \mathscr{ F } ^{ s } _t \)-measurable. By definition, \( \nu_{ 0 } (A) = 0\). It is left to show that \( ( \nu_{ t } (A))_t  \) is stochastically continuous. For~\( t> 0 \), we have
  \begin{align*}
    &\lim_{ H \searrow 0 } \mathbb{ P } {\left( \forall _{ s \in [0,t] , \abs{ h } < H}  :\, \abs{\nu_{ s+h } (A) - \nu_{ s  } (A) } > 0 \right)} \\
    & \quad = \lim_{ H \searrow  0 } \mathbb{ P } {\left( \forall _{ s \in [0,t], \abs{ h } < H } :\, (x_{ s })^{ -1 } x_{ s+h } \in  A  \right)}.
  \end{align*}
  By Corollary~\ref{Stochastic uniform continuity}, the right-hand side is zero. The assertion about~\( (X_t)_t \) follows.
\end{proof}

\section{Results for Multiplicative Processes with Bounded Jumps}\label{Results for multiplicative processes with bounded jumps} 
For additive processes and in particular L\'evy processes~\( (L_t)_t \) on~\( \mathbb{ R }  \), we know that if the jumps~\( \abs{ L_{ t } - L_{ t- }  }  \) are bounded, then all moments of~\( L_t \) are finite (see Theorem~2.4.7 of~\cite{applebaum2009levy}), and even \( \mathrm{e}^{ L_t }  \) has a finite expectation. We would like to state an assertion for multiplicative processes that resembles this fact. There are some problems however: A Lie group is, in general, not endowed with a global norm to generalize the absolute value, and we cannot even consider an expectation~\( \mathbb{ E } ( x^{ s } _t ) \) as \( \mathscr{G}  \) does not carry an addition operation. 

However, we can use the notion considered in Lemma~\ref{Local metric of Lie groups}. We can represent every~\( g \in \mathscr{G}  \) in the connected component containing~\( e \) by any given neighborhood~\( B_{ \delta  } \) around zero as
\begin{equation}
  g = \operatorname{exp}( G_1 ) \cdots \operatorname{exp}( G_m ) \label{step counter}
\end{equation}
for some~\( G_1, \dots, G_m \in B _{ \delta  } \) by the following well-known assertion mentioned on page~63 of~\cite{varadarajan2013lie}:
\begin{lemma}
  \label{Cover of the connected component}
  Let \( \mathscr{G}  \) be a connected Banach-Lie group. Then, for any~\( g \in \mathscr{G}  \), and any open set~\( V \subseteq \mathscr{G}  \) around the identity, there is \( m \in \mathbb{ N }  \) and \( g_1, \dots ,g_m \in V\) such that \( g = g_1 \cdots g_m \).
\end{lemma}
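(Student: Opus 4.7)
The plan is to use the standard topological-group argument: the subgroup generated by a symmetric open neighbourhood of $e$ is both open and closed, hence equals $\mathcal{G}$ by connectedness.

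First I would reduce to the case of a symmetric neighbourhood. Since the inversion map $i:\mathcal{G}\to\mathcal{G}$ is continuous (in fact smooth) and involutive, $i(V) = i^{-1}(V)$ is open and contains $e$, so $W := V \cap i(V)$ is an open neighbourhood of $e$ with $W \subseteq V$ and $W^{-1}=W$. It then suffices to write an arbitrary $g \in \mathcal{G}$ as a finite product of elements of $W$.

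Next I would set $H := \bigcup_{n\geq 0} W^n$ in the notation of (\ref{Cover of steps}). By symmetry of $W$, the inverse of any product $w_1 \cdots w_n \in W^n$ equals $w_n^{-1} \cdots w_1^{-1} \in W^n$, so $H$ is closed under inversion; closure under multiplication is immediate by concatenation. Thus $H$ is a subgroup of $\mathcal{G}$. Moreover $H$ is open: for every $g \in W^n \subseteq H$, the translate $gW$ is an open neighbourhood of $g$ contained in $W^{n+1} \subseteq H$, since left-translation is a homeomorphism.

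Finally, any open subgroup of a topological group is also closed, because its complement $\mathcal{G}\setminus H$ decomposes as the disjoint union of the open cosets $gH$ for $g \notin H$. Since $\mathcal{G}$ is connected and $H$ is a nonempty clopen subset, we conclude $H = \mathcal{G}$, which yields the claim. There is no genuine obstacle in this argument; the only point requiring care is the preliminary replacement of $V$ by the symmetric neighbourhood $W$, without which the subgroup generated by $V$ would a priori involve inverses of $V$-elements rather than elements of $V$ itself.
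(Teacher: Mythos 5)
Your proof is correct and is the standard argument: replace $V$ by a symmetric neighbourhood $W$, observe that $H=\bigcup_n W^n$ is an open (hence closed) subgroup, and invoke connectedness. The paper itself gives no proof but merely cites Varadarajan, where essentially this same argument appears, so there is nothing to add.
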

One can interpret the minimal such~\( m \) as some form of distance. Indeed, this quantity satisfies a triangle inequality as for~\( g,h \in \mathscr{ G }  \),
\begin{equation}
  \inf \{ m: \, gh \in V^{ m } \} \leq \inf \{ m :\, g \in V^{ m } \} + \inf \{ m :\, h \in V^{ m } \}, \label{triangle inequality for steps}
\end{equation}
for all open sets~\( V \) by the same argument as in the proof of Lemma~\ref{Local metric of Lie groups}.

In our case, we do not have to assume that \( \mathscr{G}  \) is connected to guarantee that the process remains in the connected component with~\( e \). It suffices to assume that \( (x^{ s }_t)_{ s,t } \) only has jumps within the domain of the logarithm. Then it does not jump out of the connected component of~\( \mathscr{G}  \). 

To generalize the expectation bound for L\'evy processes, we need the notion of bounded jumps. A c\`adl\`ag process~\( x = (x_t)_{ t }  \) is said to have \emph{jumps bounded in~\( A \)} if \( A \subseteq \mathscr{G}  \) contains the identity and
\begin{equation*}
  \mathbb{ P } {\left( \forall t \geq 0 :\, x^{ t- } _t \in A \right)} =1.
\end{equation*}
The process~\( x \) is said to have \emph{bounded jumps} if the jumps of~\( x \) are bounded in a set~\( (U_{ \delta  } )^{ n } \) for some~\( \delta > 0 \) such that \( U_{ \delta  } \subseteq  \operatorname{dom} (\operatorname{log} ) \) and some~\( n \in \mathbb{N }  \). Now we are in a position to state a bound for the distance as described earlier. 
The proof contains an argument reminiscent of Lemma~2 in Chapter~IV.1 of~\cite{gikhman2004theory}.
\begin{theorem}
  \label{Finite expectation of distance}
  Let \( x = (x^{ s } _t ) _{ s,t }  \) be a multiplicative process in~\( \mathscr{G}  \) with c\`adl\`ag paths. Suppose \( x \) has bounded jumps. Then, for all~\( 0< r < u \), \( \alpha \in \mathbb{ R }  \) and \( V \subseteq \mathscr{G} \) open and containing~\( e \in \mathscr{G}  \),
  \begin{equation}
    \mathbb{ E } {\left( \sup_{ r \leq s \leq t \leq u } \mathrm{e} ^{ \alpha   \inf \left\{ m:\, x^{ s } _t \in V^{ m }  \right\} } \right)} < \infty. \label{Finite expectation of distance (1)}
  \end{equation}
\end{theorem}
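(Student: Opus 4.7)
The plan is to reduce the two-parameter sup in \eqref{Finite expectation of distance (1)} to a one-sided ``length'' of the path $t \mapsto x^{r}_{t}$ from the identity, then to split $[r,u]$ into many short pieces and exploit independence together with the oscillation-count estimates from Section~\ref{Regularity}. The case $\alpha \leq 0$ is trivial (the exponent is nonpositive), so fix $\alpha > 0$. Since $g \mapsto \inf\{m : g \in V^{m}\}$ is monotone decreasing in $V$, I may shrink $V$ and assume $V = U_{\delta} := \exp(B_{\delta})$ with $\delta \in (0, \rho')$ chosen so small that $U_{\delta}$ is symmetric (Lemma~\ref{Local metric of Lie groups}), $\overline{U_{\delta}} \subseteq U_{\delta}^{2}$ (via $\exp(X) = \exp(X/2)\exp(X/2)$), and the fixed bounded-jumps neighborhood satisfies $U_{\delta_{0}}^{n_{0}} \subseteq U_{\delta}^{C}$ for some finite $C = C(\delta,\delta_{0},n_{0})$ (apply $\exp(X) = \exp(X/k)^{k}$ with $k \geq \delta_{0}/\delta$). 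Using the identity $x^{s}_{t} = (x^{r}_{s})^{-1} x^{r}_{t}$ together with the symmetry of $U_{\delta}$ and \eqref{triangle inequality for steps}, one obtains
\begin{equation*}
    \sup_{r<s<t<u} \inf\{m : x^{s}_{t} \in U_{\delta}^{m}\} \leq 2 D, \qquad D := \sup_{t \in (r,u)} \inf\{m : x^{r}_{t} \in U_{\delta}^{m}\},
\end{equation*}
so it suffices to bound $\mathbb{E}(\mathrm{e}^{2\alpha D})$.

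Next I would fix $H > 0$ (to be chosen later) and partition $[r,u]$ into $N = \lceil (u-r)/H \rceil$ intervals $I_{j} = [t_{j-1}, t_{j}]$ of length at most $H$. Iterating $x^{r}_{t} = x^{r}_{t_{j-1}} \cdot x^{t_{j-1}}_{t}$ with \eqref{triangle inequality for steps} yields $D \leq \sum_{j=1}^{N} M_{j}$ where $M_{j} := \sup_{t \in I_{j}} \inf\{m : x^{t_{j-1}}_{t} \in U_{\delta}^{m}\}$. Each $M_{j}$ is $\mathcal{F}^{t_{j-1}}_{t_{j}}$-measurable (using the c\`adl\`ag paths to realize each sup as a countable one), hence the additive-filtration hypothesis gives mutual independence of $M_{1}, \dots, M_{N}$ and the factorization $\mathbb{E}(\mathrm{e}^{2\alpha D}) \leq \prod_{j} \mathbb{E}(\mathrm{e}^{2\alpha M_{j}})$.

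The heart of the argument is estimating a single $M_{j}$. I introduce epochs $\sigma_{0} := t_{j-1}$ and $\sigma_{i+1} := \inf\{t \in (\sigma_{i}, t_{j}] : x^{\sigma_{i}}_{t} \notin U_{\delta}\}$ (infimum of the empty set being $+\infty$). On $[\sigma_{i}, \sigma_{i+1})$ the increment $x^{\sigma_{i}}_{t}$ stays in $U_{\delta}$; at $\sigma_{i+1}$ the left limit $x^{\sigma_{i}}_{\sigma_{i+1}-}$ lies in $\overline{U_{\delta}} \subseteq U_{\delta}^{2}$, while the subsequent jump lies in $U_{\delta}^{C}$ by the bounded-jumps hypothesis. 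Induction on $i$ then gives
\begin{equation*}
    M_{j} \leq (C+2)\,\bm{O}_{x}^{\delta}(I_{j}) + 1,
\end{equation*}
and the recursion from the proof of Lemma~\ref{Expectation bound for oscillations} (extended from finite grids to $I_{j}$ via Corollary~\ref{Properties of oscillation counter} and the c\`adl\`ag property) yields $\mathbb{P}(\bm{O}_{x}^{\delta}(I_{j}) \geq m) \leq \beta^{m}$ with $\beta := \mathbb{P}(\exists\, s<t \in I_{j} : x^{s}_{t} \notin U_{\delta})$. Corollary~\ref{Stochastic uniform continuity} then lets me shrink $H$ enough that $\beta < \mathrm{e}^{-2\alpha(C+2)}$, at which point a geometric series gives $\mathbb{E}(\mathrm{e}^{2\alpha M_{j}}) < \infty$ uniformly in $j$, and the finite product over the $N$ pieces is bounded. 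The main obstacle I expect is the bookkeeping at the oscillation epochs: a jump is a priori only controlled at the \emph{fixed} scale $U_{\delta_{0}}^{n_{0}}$ coming from the bounded-jumps assumption, so it must be re-expressed at the finer, $V$-dependent scale $U_{\delta}$ (via $\exp(X) = \exp(X/k)^{k}$) while keeping the constant $C$ finite and uniform in $i$, and one must simultaneously confirm that $\overline{U_{\delta}} \subseteq U_{\delta}^{2}$ really lets the continuous part between jumps be absorbed without blowing up the estimate.
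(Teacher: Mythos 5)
Your strategy is sound and reaches the same estimate, but the technical core differs from the paper's. Both proofs share the outer architecture: reduce the two-parameter supremum to a one-sided one via the triangle inequality \eqref{triangle inequality for steps} and symmetry of $U_\delta$, split $[r,u]$ into short blocks, factor the expectation across blocks by the additivity of the filtration, and on each block argue that the step-count grows by at most a bounded amount ($j+1$ in the paper's notation, $C+2$ in yours) per excursion out of the small neighbourhood, with the number of excursions having a geometric tail. Where you diverge is in how that geometric tail is produced: the paper runs a recursion directly on the tail probabilities of the distance functional, applying the strong Markov property at the first exit time $\tau_s=\inf\{z>s:\,x^s_z\notin V\}$ (together with a delicate approximation of $\tau_s$ by stopping times with countable range to justify the disintegration step (i) of \eqref{Finite expectation of distance (6)}); you instead dominate the step-count by $(C+2)\,\bm{O}_x^{\delta}+1$ and invoke the geometric bound already proved in Lemma~\ref{Expectation bound for oscillations}. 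Your route buys a cleaner probabilistic argument --- no strong Markov property, no stopping-time discretization --- at the price of two pieces of bookkeeping you only gesture at: (a) Lemma~\ref{Expectation bound for oscillations} is stated for finite deterministic grids, while your epochs $\sigma_i$ are random times, so each excursion must be converted into an oscillation visible on a countable dense grid; this forces a scale change (an excursion out of $U_\delta$ is only guaranteed to register, after right-approximation of the $\sigma_i$ by rationals, as an oscillation at a smaller scale $\delta'$ with $U_{\delta'}^2\subseteq U_\delta$) before Corollary~\ref{Properties of oscillation counter} and monotone convergence can be applied; and (b) your justification of $\overline{U_\delta}\subseteq U_\delta^2$ via $\exp(X)=\exp(X/2)\exp(X/2)$ only shows $\exp(\overline{B_\delta})\subseteq U_\delta^2$, which in infinite dimensions need not exhaust the closure of $\exp(B_\delta)$ in $\mathcal{G}$; the correct (and easier) argument is the topological-group fact that $g\in\overline{U}$ gives $g_n^{-1}g\to e$ for some sequence $g_n\in U$, hence $g=g_n\,(g_n^{-1}g)\in U^2$ eventually. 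Neither point is a genuine gap, but both must be carried out for your proof to close.
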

A trivial but helpful fact to estimate the integral of a decreasing function~\( g: [0,\infty ) \to [0,\infty ) \) using its values along a sequence~\( ( \xi_n ) _n \subseteq [0, \infty )\) such that \( 0 = \xi_0 < \xi_1 < \dots < \xi_n \to \infty  \) is given by
\begin{equation}
  \int_{ 0 } ^{ \infty  } g(\xi) \, \dd \xi \leq \sum_{ n=0 } ^{ \infty  } g(\xi_n) \cdot ( \xi_{ n+1 } - \xi_n  ). \label{Integral estimation for a product}
\end{equation}
\begin{proof} 
  We first show a weaker result: We claim that
  \begin{equation}
    \sup_{ r \leq s \leq u } \mathbb{ E } {\left( \sup_{ s \leq t \leq u} \mathrm{e} ^{ \alpha M(x^{ s } _t) } \right)} < \infty, \label{Finite expectation of distance (2)}
  \end{equation}
  where \( M(x) = \inf \left\{ m:\, x \in V^{ m }  \right\} \). Notice that \( \sup_{ r \leq s \leq t \leq u } \mathrm{e} ^{ \alpha M(x^{ s } _t) } \) is measurable by the measurable projection theorem. Let \( x \) have jumps bounded in~\( (U_{ \delta  } )^{ N }  \) for some \( N \in \mathbb{ N }  \). By (\ref{Symmetric system}), there is \( \epsilon > 0 \) such that \( \exp ( B_{ \epsilon  } ) \subseteq V \) because \( e \in V \). Hence, we may assume \( V = \exp ( B_{ \epsilon  } ) \) as that makes \( V \) smaller and \( M \) larger. By boundedness of the jumps, there is \( j \in \mathbb{ N }  \) such that~\( ( U_{ \delta  } )^{ N }   \subseteq V^{ j }  \). We can safely assume that {\( j \geq 3 \)}.

  Notice that for all~\( s < t  \), all stopping times~\( \tau  \), and all~\( \omega \in \{ \omega \in \Omega :\tau ( \omega ) \in [s,t] \} \),
  \begin{equation}
    M(x^{ s } _t( \omega )) \leq M(x ^{ s } _{ \tau - } ( \omega ) ) + M( x^{ \tau -  } _{ \tau  }( \omega ) ) + M( x^{ \tau  } _{ t } ( \omega )) \label{Finite expectation of distance (3)}
  \end{equation}
  by (\ref{triangle inequality for steps}). Also, \( M( x^{ \tau  - } _{ \tau   } ) \leq j \) because \( x^{ \tau - }_{ \tau  } \in (U_{ \delta  } )^{ N } \subseteq V^{ j }  \). In particular, for~\( \tau = \tau _s := \inf \left\{ z>s:\, x^{ s } _z \notin V \right\}  \), we find {\(  M(x^{ s } _{ \tau_s-  } )  \leq 2\)} by definition of~\( \tau_{ s }  \). Hence,
  \begin{equation}
    M ( x^{ s } _t ( \omega ) ) \leq {2}+j +  M(x^{ \tau _{ s }  } _{ t   } ( \omega ) ) \qquad \text{for all~\( \omega \in \{ \tau_{ s }  \leq u \} \)} . \label{Finite expectation of distance (4)}
  \end{equation}
  By stochastic uniform continuity (Corollary~\ref{Stochastic uniform continuity}), for every~\( \beta > 0 \) and \( T>0 \), there is \( H > 0 \) such that for all \( r \in [0, T] \)
  \begin{equation}
    \sup_{ r \leq  s \leq  r + H}  \mathbb{ P } ( \tau_{ s  }  \leq  r+ H ) < \mathrm{e} ^{ - \beta  } \label{Finite expectation of distance (5)},
  \end{equation}
  because \(  \sup_{ r \leq  s \leq  r + H } \mathbb{ P } ( \tau_{ s  } \leq  r+ H ) = \sup_{ r \leq  s \leq  r + H } \mathbb{ P } ( \exists_{ t \in [s,r+H] }: x^{ s } _t \notin V ) \) as the events are identical. Choose some~\( \beta > 4 \alpha j \) and \( H > 0 \) such that (\ref{Finite expectation of distance (5)}) holds. By taking a smaller value for \( H \), we can assume \( T/H =: K \in \mathbb{ N }  \)
  \begin{mycases}
    \item \( u \leq r+H \). \label{Finite expectation of distance I}
    \item[] Let \( \gamma \geq  \mathrm{e} ^{ 2 \alpha  }  \). Then \( \sup_{ s \leq  t \leq  u } \mathrm{e} ^{ \alpha M( x^{ s } _t )  } > \gamma  \) only if \( \tau_{ s } \leq  u \) and in that case 
      \begin{equation*}
        \sup_{ s \leq  t \leq  u } \mathrm{e} ^{ \alpha M ( x^{ s } _t ) }  =  \sup_{   \tau_{ s }   \leq  t \leq  u}  \mathrm{e} ^{ \alpha M( x^{ s } _t ) }.
      \end{equation*}
      We find therefore
      \begin{align}
        &\mathbb{ P } {\left( \sup_{ s \leq t \leq u } \mathrm{e}^{ \alpha  M(x^{ s } _t ) } > \gamma  \right)} \nonumber\\
        &\qquad = \mathbb{ P } {\left( \sup_{  \tau_{ s }  \leq t \leq u  } \mathrm{e}^{ \alpha M ( x^{ s } _t ) }> \gamma  , \, \tau_{ s } \leq u \right)} \nonumber \\
        &\qquad \overset{\mathclap{(\ref{Finite expectation of distance (4)})}}{\leq} \, \mathbb{ P } {\left( \sup_{ \tau_{ s } \leq t \leq u } \mathrm{e}^{ \alpha (j+{2}) } \mathrm{e}^{ \alpha M(x^{ \tau _s } _t ) } > \gamma , \, \tau _s \leq u \right)} \nonumber\\
        &\qquad \overset{\text{(i)}}{\leq} \int_{ s } ^{ u } \mathbb{ P } {\left( \sup_{ z \leq t \leq u } \mathrm{e}^{ \alpha (j+ {3}) }\mathrm{e}^{ \alpha  M(x^{ z } _t ) } > \gamma \right)} \cdot \mathbb{ P } {\left( \tau_{ s } \in \dd z \right)}  \nonumber\\
        & \qquad \leq \sup_{ r \leq s \leq u } \mathbb{ P } {\left( \sup_{ s \leq t \leq u } \mathrm{e}^{ \alpha (j+{3}) } \mathrm{e}^{ \alpha M (x^{ s } _t ) } > \gamma  \right)} \cdot \sup_{ r \leq s \leq u } \mathbb{ P } {\left( \tau_{ s } \leq u \right)}.\label{Finite expectation of distance (6)}
      \end{align}
    We show that (i) holds. Notice that
    \begin{align}
      &\mathbb{ P }  {\left( \sup_{ \tau_{ s } \leq t \leq u } \mathrm{e} ^{ \alpha (j+{2}) } \mathrm{e} ^{ \alpha M(x^{ \tau_{ s }  } _t) }  > \gamma  ,  \tau_{ s } \leq u   \right)}  \nonumber \\ 
      & \quad = \mathbb{ E } {\left( \mathbb{ E } {\left( \mathbb{ I }  \Bigl\{  \sup_{ \tau_{ s } \leq t \leq u}  \mathrm{e} ^{ \alpha (j+ {2}) } \mathrm{e} ^{ \alpha M(x^{ \tau_{ s }  } _t) }  > \gamma \Bigr\}  \cdot \mathbb{ I } \{ \tau_{ s } \leq u \}  \middle\vert \mathscr{ F } _{ \tau_{ s }  }  \right)}  \right)} \nonumber \\ 
      & \quad = \mathbb{ E } {\left(   \mathbb{ I } \{ \tau_{ s }  \leq u \} \mathbb{ E } {\left( \mathbb{ I }  \Bigl\{  \sup_{ \tau_{ s } \leq t \leq u} \mathrm{e} ^{ \alpha (j+ {2}) } \mathrm{e} ^{ \alpha M(x^{ \tau_{ s }  } _t) }  > \gamma \Bigr\}  \middle\vert \mathscr{ F } _{ \tau_{ s }  }  \right)}  \right)}.\label{Finite expectation of distance (7-1)} 
    \end{align}
    By Lemma \ref{Strong Markov property} for all \( n \in \mathbb{ N }  \), we have that 
    \begin{align}
        &\mathbb{ E } {\left( \mathbb{ I } \biggl\{ \sup_{ k \leq 2^n } \mathrm{e} ^{ \alpha (j+ {2}) } \mathrm{e} ^{  \alpha  M \bigl( x^{ \tau_{ s }  } _{ \tau_{ s } + \frac{ k }{ 2^n } (u- \tau_{ s } ) } \bigr) } > \gamma  \biggr\}  \middle\vert \mathscr{ F } _{ \tau_{ s }  }  \right)} \nonumber \\
        & \quad = \mathbb{ E }_{ ( \tau_{ s } , x_{ \tau_{ s }  } ) }  {\left(    \mathbb{ I } \bigg\{ \sup_{ k \leq 2^n } \mathrm{e} ^{ \alpha (j+ {2}) } \mathrm{e} ^{  \alpha  M\bigl(x^{ \tau_{ s }  } _{ \tau_{ s } + \frac{ k }{ 2^n } (u- \tau_{ s } ) } \bigr)} > \gamma  \biggr\}   \right)} \nonumber .
    \end{align}
    For \( (g_n)_n \subseteq \mathscr{ G }  \) with \( \lim_{ n \to \infty } g_n = g \), we have \( \liminf_{ n \to \infty  } M(g_n) \geq M(g) -1 \), because \( M \) has jumps of size \( 1 \) at points of discontinuity. Hence, 
    \begin{equation}
      \liminf_{ n \to \infty  } \sup_{ k \leq 2^{ n }  } M(x^{ \tau_{ s }  } _{ \tau_{ s } + \frac{ k }{ 2^{ n }  } (u -\tau_{ s } ) } ) + 1 \geq \sup_{ \tau_s ( \omega ) \leq t \leq u } M(x^{ \tau_{ s }  } _t) .
    \end{equation} Therefore,
    \begin{align}
      &  \mathbb{ E } {\left(   \mathbb{ I } \{ \tau_{ s } \leq u \} \mathbb{ E } {\left( \mathbb{ I }  \biggl\{  \sup_{ \tau_{ s } \leq t \leq u } \mathrm{e} ^{ \alpha (j+ {2}) } \mathrm{e} ^{ \alpha M(x^{ \tau_{ s }  } _t) }  > \gamma \biggr\}  \middle\vert \mathscr{ F } _{ \tau_{ s }  }  \right)}  \right)} \nonumber \\
      & \quad \leq \mathbb{ E } {\left(   \mathbb{ I } \{ \tau_{ s } \leq u \} \mathbb{ E } {\left( \lim_{ n \to \infty  } \mathbb{ I }  \biggl\{  \sup_{ k \leq 2^{ n }  } \mathrm{e} ^{ \alpha (j+ {3}) } \mathrm{e} ^{ \alpha M \bigl( x^{ \tau_{ s } } _{ \tau_{ s } + \frac{ k }{ 2^{ n }  } ( u- \tau_{ s } ) }      \bigr)  }  > \gamma \biggr\}  \middle\vert \mathscr{ F } _{ \tau_{ s }  }  \right)}  \right)} \nonumber \\
      & \quad \overset{\mathclap{\text{(ii)}}}{=}  \lim_{ n \to \infty  } \mathbb{ E } {\left(   \mathbb{ I } \{ \tau_{ s } \leq u \} \mathbb{ E } {\left( \mathbb{ I }  \biggl\{  \sup_{ k \leq 2^{ n }  } \mathrm{e} ^{ \alpha (j+ {3}) } \mathrm{e} ^{ \alpha M \bigl(x^{ \tau_{ s } } _{ \tau_{ s } + \frac{ k }{ 2^{ n }  } ( u- \tau_{ s } ) } \bigr) }  > \gamma \biggr\}  \middle\vert \mathscr{ F } _{ \tau_{ s }  }  \right)}  \right)} \nonumber \\
      & \quad =  \lim_{ n \to \infty  } \mathbb{ E } {\left(   \mathbb{ I } \{ \tau_{ s } \leq u \} \mathbb{ E }_{ ( \tau_{ s } , x_{ \tau_{ s }  } ) }  {\left( \mathbb{ I }  \biggl\{  \sup_{ k \leq 2^{ n }  } \mathrm{e} ^{ \alpha (j+ {3}) } \mathrm{e} ^{ \alpha M \bigl(x^{ \tau_{ s } } _{ \tau_{ s } + \frac{ k }{ 2^{ n }  } ( u- \tau_{ s } ) } \bigr) }  > \gamma \biggr\}  \right)}  \right)} \nonumber \\
      &  \quad \overset{\mathclap{\text{(iii)}}}{\leq} \mathbb{ E } {\left(   \mathbb{ I } \{ \tau_{ s } \leq u \} \mathbb{ E }_{ ( \tau_{ s }, x_{ \tau_{ s }  } )  }  {\left( \mathbb{ I }  \biggl\{  \sup_{ \tau_{ s } \leq t \leq u } \mathrm{e} ^{ \alpha (j+ {3}) } \mathrm{e} ^{ \alpha M(x^{ \tau_{ s }  } _t) }  > \gamma \biggr\}   \right)}  \right)}.
    \end{align}
    Here, we applied monotone convergence in (ii) and used that 
    \begin{equation*}
      \sup_{ \tau_{ s } \leq t \leq u }\mathrm{e} ^{ \alpha M\bigl(x^{ \tau_{ s }  } _t\bigr) }  \geq     \sup_{ k \leq 2^{ n }  }  \mathrm{e} ^{ \alpha M\bigl(x^{ \tau_{ s } } _{ \tau_{ s } + \frac{ k }{ 2^{ n }  } ( u- \tau_{ s } ) } \bigr) } 
    \end{equation*}
    in (iii). Therefore, by (\ref{Finite expectation of distance (7-1)})
    \begin{align}
      &  \mathbb{ P } {\left( \sup_{ \tau_{ s } \leq t \leq u } \mathrm{e}^{ \alpha (j+ {2}) } \mathrm{e}^{ \alpha M(x^{ \tau _s } _t ) } > \gamma , \, \tau _s \leq u \right)} \nonumber \\
      & \quad \leq  \mathbb{ E } {\left( \mathbb{ I }  \{ \tau_{ s } \leq  u \}  \mathbb{ P }_{ ( \tau_{ s }  , x_{ \tau_{ s } } ) }  {\left( \sup_{ \tau_{ s }  \leq t  \leq  u }  \mathrm{e}^{ \alpha (j+ {3}) }\mathrm{e}^{ \alpha  M(x^{ \tau_{ s }  } _t)  } > \gamma    \right)} \right)}  \nonumber \\
       & \quad = \int_{ [s,u] \times \mathscr{ G }  } \mathbb{ P } _{ (z,y) }   {\left(  \sup_{ z \leq  t \leq  u }    \mathrm{e}^{ \alpha (j+ {3}) } \mathrm{e}^{ \alpha M(x^{ z } _t ) }    > \gamma    \right)}\cdot \mathbb{ P } {\left( \tau_{ s } \in \dd z, x_{ z } \in \dd y \right)} \nonumber  \\
       & \quad \overset{\mathclap{\text{(iv)}}}{=} \int_{ [s,u] \times \mathscr{ G } } \mathbb{ P }  {\left(  \sup_{ z \leq  t \leq  u }    \mathrm{e}^{ \alpha (j+ {3}) } \mathrm{e}^{ \alpha M(x^{ z } _t ) }    > \gamma    \right)}\cdot \mathbb{ P } {\left( \tau_{ s } \in \dd z, x_{ z } \in \dd y \right)} \nonumber  \\
       & \quad = \int_{ s } ^{ u } \mathbb{ P }   {\left(  \sup_{ z \leq  t \leq  u }    \mathrm{e}^{ \alpha (j+ {3}) } \mathrm{e}^{ \alpha M(x^{ z } _t ) }    > \gamma    \right)}\cdot \mathbb{ P }  {\left( \tau_{ s } \in \dd z \right)} ,
    \end{align}
    where in (iv) we used the independence of the increments.
    Thus, (i) holds.

  Applying (\ref{Finite expectation of distance (6)}) recursively, we obtain for all~\( \gamma \geq e^{ 2 \alpha  }  \)
      \begin{equation} 
        \sup_{ r \leq s \leq u } \mathbb{ P } {\left( \sup_{ s \leq t \leq u } \mathrm{e}^{ \alpha M(x^{ s } _t ) } > \gamma \mathrm{e}^{ \alpha k(j+ {3}) }  \right)} \leq \sup_{ r \leq s \leq u} \mathbb{ P } {\left( \tau_{ s } \leq u \right)}^{ k } .\label{Finite expectation of distance (8)}
      \end{equation}
      We find
      \begin{align}
          &\mathbb{ E } {\left( \sup_{ s \leq t \leq u } \mathrm{e}^{ \alpha M( x^{ s } _t )  } \right)} \nonumber\\
          & \quad \leq  \mathrm{e} ^{ 2 \alpha  }  + \int_{ \mathrm{e} ^{ 2\alpha  }  } ^{ \infty  } \mathbb{ P } {\left( \sup_{ s \leq t \leq u } \mathrm{e}^{ \alpha  M( x^{ s } _t )   } > \gamma \right)} \dd \gamma \nonumber\\
          & \quad \overset{\mathclap{(\ref{Integral estimation for a product})}}{\leq} \, \mathrm{e} ^{ 2 \alpha  } +  \sum_{ k=0 } ^{ \infty  } \mathbb{ P } {\left( \sup_{ s \leq t \leq u} \mathrm{e}^{ \alpha  M(x^{ s } _t ) } > \mathrm{e} ^{ 2 \alpha  } \mathrm{e} ^{ \alpha k (j+ {3}) }  \right)} \cdot   \mathrm{e} ^{ 2 \alpha  } \mathrm{e} ^{ \alpha (k+1) (j+2) }   \nonumber\\
          & \quad \overset{\mathclap{(\ref{Finite expectation of distance (8)})}}{\leq}  \, \,  \mathrm{e} ^{ 2 \alpha  } +  \sum_{ k=0 } ^{ \infty  } \sup_{ r \leq  s \leq  u} \mathbb{ P } {\left( \tau_{ s } \leq u \right)} ^{ k } \cdot   \mathrm{e} ^{ 2 \alpha  } \mathrm{e} ^{ \alpha (k+1) (j+ {3}) } \nonumber\\
          & \quad \overset{\mathclap{\text{(v)}}}{\leq} \mathrm{e} ^{ 2 \alpha  } + \sum_{ k=0 } ^{ \infty  } \mathrm{e} ^{ - \beta k }  \mathrm{e} ^{ 2 \alpha  } \mathrm{e} ^{ 4 \alpha kj }, \label{Finite expectation of distance (9)}
      \end{align}
      where we use (\ref{Integral estimation for a product}) with~\( \xi _k = \mathrm{e} ^{ 2 \alpha  } \mathrm{e} ^{ \alpha k (j+2) }  \). In (v), we use the estimate~\( (k+1)(j+ {3}) \leq 2k \cdot 2j \) as {\( j \geq 3 \)} and (\ref{Finite expectation of distance (5)}). 
  
  \item \( u > r+H \).
  \item[] We find that~\( u < T \leq r+KH \). Then, for all~\( s \in [r,r+KH] \),
      \begin{align*}
          &\sup_{ s \leq t \leq u } \mathrm{e} ^{ \alpha M (x^{ s } _t ) } \\
          & \quad \leq \quad  \sup_{ \mathclap{ s \leq t_1 \leq s+H} } \quad   \mathrm{e}^{ \alpha M(x^{ s } _{ t_1 }  ) }  \times \cdots \times \qquad  \sup_{ \mathclap{ s+(K-1)H \leq t_K \leq s+KH} } \qquad \mathrm{e}^{ \alpha \inf M(x^{ s+ (K-1)H } _{ t_K } )}  .
      \end{align*}
      Now the expectation of this product is the product of the expectations of these factors by independence. By Case \ref{Finite expectation of distance I}, these are finite.
  \end{mycases}
  This proves (\ref{Finite expectation of distance (2)}). Now we are in a position to show (\ref{Finite expectation of distance (1)}). Notice that
  \begin{align}
    M( x^{ s } _t ) & =  M(x^{ s }_u (x^{ t } _u ) ^{ -1 } )   \leq M (x^{ s }_u ) + M(x^{ t } _u ), \label{Finite expectation of distance (10)}
  \end{align}
  since \( (x^{ t } _u)^{ -1 } \in V \) iff \( x^{ t } _u \in V \) by assumption on \( V \).
  \begin{align*}
    \mathbb{ E } {\left( \sup_{ r \leq s \leq t \leq u } \mathrm{e}^{  \alpha M (x^{ s } _t ) } \right)} & \leq \mathbb{ E } {\left( \sup_{ r \leq s \leq t \leq u } \mathrm{e}^{ \alpha {\left( M(x^{ r } _s ) + M(x^{ r } _t )\right)}} \right)}  \\
                                                                                                                                                         & \leq \mathbb{ E } {\left( \sup_{ r \leq t \leq u } \mathrm{e}^{2 \alpha {\left( M(x^{ r } _t )\right)}} \right)}  \\
                                                                                                                                                         & \leq \sup_{ r \leq s \leq u} \mathbb{ E } {\left( \sup_{ s \leq t \leq u}  \mathrm{e}^{  2\alpha  M( x^{ s } _t ) } \right)} .                           
  \end{align*}
  By (\ref{Finite expectation of distance (2)}), the right-hand side is finite. This proves the theorem.
\end{proof}
Lie groups, in general, do not have a global metric, but connected Lie groups do. If a process has bounded jumps, then this process remains within the connected component containing the identity. Therefore, it is often not a loss of generality to assume that the considered Lie group is connected. In this case, the Birkhoff-Kakutani theorem ensures the existence of a left-invariant metric as well as a right-invariant metric~\cite{montgomery1955topological}. Since a metric~\( d \) on a Banach space can only induce a norm if \( d(u+w,v+w)=d(u,v) \) for all vectors~\( u,v,w \), metrics that are left- or right-invariant on a Lie group behave similarly to norms in that aspect.
\begin{theorem}\label{Finite expectation of metric}
  Suppose \( d \) is a left-invariant metric on~\( \mathscr{ G }  \) and \( (x_t)_t  \) is a multiplicative process with c\`adl\`ag paths and bounded jumps. Then, for all~\( 0 <r < u  \) and \( \alpha \in \mathbb{ R }  \), 
\begin{equation}
  \mathbb{ E } {\left( \sup_{ r  \leq  s  \leq  t \leq  u } \mathrm{e} ^{ \alpha d ( x_s, x_t  ) } \right)} < \infty . \label{Finite expectation of metric (1)}
\end{equation}
Moreover, for fixed~\( T > 0 \), we have~\( \lim_{ \abs{ u-r } \to 0 }  \mathbb{ E } {\left( \sup_{  r \leq  s \leq  t \leq  u } \mathrm{e} ^{ \alpha d ( x_s, x_t  ) } -1 \right)} = 0 \), where \( r < u < T \).
\end{theorem}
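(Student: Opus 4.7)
The plan is to derive both statements from Theorem~\ref{Finite expectation of distance} together with Corollary~\ref{Stochastic uniform continuity}, exploiting the left-invariance of~$d$ to reduce $d(x_s, x_t)$ to $d(e, x^s_t)$.

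For~(\ref{Finite expectation of metric (1)}), left-invariance gives $d(x_s, x_t) = d(e, x_s^{-1} x_t) = d(e, x^s_t)$. The case $\alpha \leq 0$ is immediate because $\mathrm{e}^{\alpha d} \leq 1$ pointwise. For $\alpha > 0$, I take the open neighborhood $V := \{g \in \mathcal{G} : d(e, g) < 1\}$ of~$e$. If $g = g_1 \cdots g_m$ with each $g_i \in V$, then by triangle inequality and left-invariance,
\[
d(e, g) \leq \sum_{i=1}^{m} d(g_1 \cdots g_{i-1}, g_1 \cdots g_i) = \sum_{i=1}^{m} d(e, g_i) < m,
\]
so that $d(e, x^s_t) \leq \inf\{m : x^s_t \in V^m\}$, the infimum being finite because the bounded-jumps assumption confines $x^s_t$ to the connected component of~$e$, where Lemma~\ref{Cover of the connected component} applies. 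The pointwise inequality $\mathrm{e}^{\alpha d(x_s, x_t)} \leq \mathrm{e}^{\alpha \inf\{m : x^s_t \in V^m\}}$ then propagates to the supremum, and Theorem~\ref{Finite expectation of distance} applied with this~$V$ delivers~(\ref{Finite expectation of metric (1)}).

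For the limit statement, the case $\alpha \leq 0$ is again trivial: for any fixed $s_0 \in (r, u)$, right-continuity of~$x$ gives $d(x_{s_0}, x_t) \to 0$ as $t \searrow s_0$, so $\sup_{r<s<t<u} \mathrm{e}^{\alpha d(x_s, x_t)} = 1$ almost surely. For $\alpha > 0$, write $D_{r,u} := \sup_{r<s<t<u} d(x_s, x_t)$. For $\epsilon > 0$, decomposing on $\{D_{r,u} \leq \epsilon\}$ versus its complement and applying the Cauchy-Schwarz inequality yields
\[
\mathbb{E}\bigl(\mathrm{e}^{\alpha D_{r,u}} - 1\bigr) \leq \bigl(\mathrm{e}^{\alpha \epsilon} - 1\bigr) + \sqrt{\mathbb{E}\bigl(\mathrm{e}^{2\alpha D_{r,u}}\bigr)} \cdot \sqrt{\mathbb{P}(D_{r,u} > \epsilon)}.
\]
The first square root is bounded by a constant $C(\alpha, T)$ uniformly in $r < u \leq T$: inspection of Case~2 in the proof of Theorem~\ref{Finite expectation of distance} shows that the bound factorises into $K = \lceil T/H \rceil$ independent contributions, each controlled by the Case~1 estimate without further dependence on the endpoints. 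For the probability, the compatibility of the Birkhoff-Kakutani metric~$d$ with the manifold topology permits choosing $\delta \in (0, \rho')$ so small that $U_\delta \subseteq \{g : d(e, g) < \epsilon\}$; then $\{D_{r,u} > \epsilon\} \subseteq \{\exists_{r<s<t<u} : x^s_t \notin U_\delta\}$, whose probability is driven below any prescribed threshold by shrinking $|u - r|$ thanks to Corollary~\ref{Stochastic uniform continuity}. Sending $|u - r| \to 0$ and then $\epsilon \to 0$ concludes the argument.

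The main technical obstacle I anticipate is the uniform-in-$(r, u)$ bound on $\mathbb{E}(\mathrm{e}^{2\alpha D_{r,u}})$, which is not directly supplied by the statement of Theorem~\ref{Finite expectation of distance} and must instead be harvested from its proof. A secondary delicate point is ensuring that the exponential neighborhoods $U_\delta$ and the $d$-balls are genuinely comparable, which relies on the fact that the left-invariant metric~$d$ furnished by Birkhoff-Kakutani induces precisely the manifold topology on~$\mathcal{G}$.
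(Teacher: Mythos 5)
Your argument is correct, but it reaches the result by a genuinely different route than the paper. The paper's proof is declared to be ``analogous'' to that of Theorem~\ref{Finite expectation of distance}: one reruns the whole recursion with \( d(x_s,x_t)=d(e,x^s_t) \) in place of the step-count \( \inf\{m:\,x^s_t\in V^m\} \), using the subadditivity \( d(e,gh)\leq d(e,g)+d(e,h) \) (from left-invariance and the triangle inequality) where the original proof uses~(\ref{triangle inequality for steps}). You instead treat Theorem~\ref{Finite expectation of distance} as a black box and derive~(\ref{Finite expectation of metric (1)}) from the pointwise domination \( d(e,g)\leq\inf\{m:\,g\in V^m\} \) with \( V \) the unit \( d \)-ball — a cleaner and shorter derivation of the first assertion, and one that makes the logical dependence between the two theorems explicit rather than implicit. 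The trade-off shows up in the ``Moreover'' clause (which the paper does not prove at all): since the statement of Theorem~\ref{Finite expectation of distance} fixes \( r<u \), you must reopen its proof to extract the bound on \( \mathbb{E}\bigl(\mathrm{e}^{2\alpha D_{r,u}}\bigr) \) uniformly over \( r<u\leq T \); your reading of Cases 1 and 2 is accurate — the Case 1 bound~(\ref{Finite expectation of distance (9)}) depends only on \( \alpha,\beta,j \), with \( \beta,H \) chosen via Corollary~\ref{Stochastic uniform continuity} from \( T \) alone, and Case 2 multiplies \( K=T/H \) such factors — but this is exactly the uniformity that an ``analogous'' rerun of the proof would deliver for free. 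Your truncation-plus-Cauchy--Schwarz argument for the limit, combined with Corollary~\ref{Stochastic uniform continuity} to kill \( \mathbb{P}(D_{r,u}>\epsilon) \), is sound. One caveat you rightly flag: both your domination \( V\supseteq U_\delta \)-style comparisons and the theorem itself implicitly require that \( d \) be compatible with the manifold topology (more precisely, that \( d \) be continuous at \( e \), so that the unit \( d \)-ball contains some \( U_\delta \)); this is automatic for the Birkhoff--Kakutani metric the paper has in mind but is not literally part of the hypothesis as stated.
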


The theorem is shown analogously and we use the left-invariance of the metric together with the assumption that the process is multiplicative from left to right, i.e., \( x^{ s } _t x^{ t } _u = x^{ s } _u \). But the same result as in Theorem \ref{Finite expectation of metric} holds if \( d  \) is a right-invariant metric instead of a left-invariant one. This can be seen noting that if \( (x^{ s } _t)_{ s,t }  \) has bounded jumps, then so does \( ( (x^{ s } _t)^{ -1 } )_{ s,t }  \) which is multiplicative from right to left.

\section{Multiplicative Processes on the Heisenberg Group}\label{Multiplicative processes on the Heisenberg group} 
Following the notation of~\cite{eberlein1994geometry}, a \( (2n+1) \)-dimensional vector space \( \mathfrak{ h }  \) with basis \( \{ x_1,y_1, \dots,\allowbreak x_n, y_n, z \} \) is called a \emph{Heisenberg algebra} if it is endowed with a Lie bracket satisfying the \emph{Heisenberg relations}
\begin{alignat*}{2}
  &[x_i, y_i ] = z \qquad && \text{for \(i \leq n\)},\\
  &[x_i, z ] = 0,  && \text{for \( i \leq n \)}, \\
  &[x_i,x_j] = [y_i,y_j] = [x_i,y_j] = 0 \qquad  && \text{for \( i \neq j \leq n \).} 
\end{alignat*}
We construct an infinite-dimensional analogue as follows: Assume \( p \in (1,\infty ) \) and \( q = \tfrac{ p }{ p-1 }  \). As is customary, \( \ell^p( \mathbb{ N } ) \) is defined as the space of~\( p \)-integrable~\( \mathbb{ R }   \)-valued sequences. Since \( \tfrac{ 1 }{ p } + \tfrac{ 1 }{ q  } = 1 \), the space~\( \ell^q( \mathbb{ N } ) \) is the dual of~\( \ell^p(\mathbb{ N })  \) and vice-versa. We denote by~\( {\left\langle x \vert y \right\rangle}   \) the dual pairing of elements~\( x \in \ell^p( \mathbb{ N } ) \) and \( y \in \ell^q ( \mathbb{ N } ) \). We define the Lie group~\( \mathscr{ G } = \ell^p ( \mathbb{ N } ) \times \ell^q ( \mathbb{ N } ) \times \mathbb{ R } \) with multiplication by the rule 
\begin{equation*}
  {\left( x_1 , y_1 , z_1 \right)} \cdot  {\left( x_2 , y_2 , z_2 \right)} = {\left( x_1 + x_1 , y_1 + y_2 , z_1 + z_2 + \tfrac{ 1 }{ 2 } (\langle x_1 \vert y_2 \rangle  - \langle x_2 \vert y_1 \rangle) \right)}.
\end{equation*}
This multiplication map is indeed smooth. Each element~\( (x,y,z) \in \mathscr{ G }  \) has an inverse~\( (-x,-y,-z) \), and this mapping is smooth as well. Hence, \( \mathscr{ G }  \) becomes a Banach-Lie group on the manifold~\( \ell^p ( \mathbb{ N } ) \times \ell^q ( \mathbb{ N } ) \times \mathbb{ R } \). It can be shown that the associated Lie algebra is \( \mathfrak{g} = \ell^p ( \mathbb{ N } ) \times \ell^q ( \mathbb{ N } ) \times \mathbb{ R } \) with the Lie bracket given by \( [(x_1,y_1,z_1),(x_2,y_2,z_2)] = (0,0,  \langle x_1 \vert y_2 \rangle - \langle x_2 \vert y_1 \rangle ) \). We denote by \( (e_j)_j \) the canonical basis of \( \ell^{ p } ( \mathbb{ N } ) \) and by \( (f_j)_j \) the canonical basis of \( \ell^{q} ( \mathbb{ N } ) \). The collection~\( \{x_1,y_1, x_2,y_2, \dots, z \}\), where \( x_j = (e_j,0,0) \), \( y_k = (0,f_j,0)\) and \( z = (0,0,1) \) forms a basis of~\( \mathfrak{g} \). It can be checked that this collection also satisfies the Heisenberg relations. In this sense the Lie group \( \mathscr{ G }  \) deserves the name Heisenberg group. Similarly defined Heisenberg groups are studied in~\cite{kac1990infinite} and further considered in a stochastic context in~\cite{driver2008heat}.

We take an additive filtration~\( {\left( \mathscr{ F } ^{ s } _t  \right)} _{ s,t }  \). Consider three processes~\( X ,Y , Z  \), where \( X  \) is \( \ell^p ( \mathbb{ N } ) \)-valued, \(Y  \) is \( \ell^{ q } ( \mathbb{ N } ) \)-valued, and \( Z  \) is \( \mathbb{ R }  \)-valued. We assume that these are additive processes with respect to the filtration~\( (\mathscr{ F } ^{ s } _t)_{ s,t }  \). We claim that the~\( \mathscr{ G }  \)-valued process~\( (x^s_t)_{ s,t }  \) given by 
\begin{equation*}
  x^s_t= {\left( X_t - X_s, Y_t - Y_s , Z_t- Z_s + \tfrac{ 1 }{ 2 } \int_{ s<t_1<t_2 \leq t } {\left\langle \dd X _{ t_1 }  \middle\vert \dd Y_{ t_2 } \right\rangle} - {\left\langle  \dd X_{ t_2 }  \middle\vert \dd Y_{ t_1 }  \right\rangle}   \right)} 
\end{equation*}
is a multiplicative process assuming that the integrals are well-defined. It is clear that \( (x^{ s } _t)_{ s,t }  \) is stochastically continuous and adapted to the filtration~\( {\left( \mathscr{ F } ^{ s } _t \right)} _{ s,t }  \). It is left to check that \( x^{ 0 } _t = x^{ 0 } _s x^{ s } _t \) if \( 0 \leq s \leq t \). But only the last coordinate of~\( x^{ 0 } _t \) has to be checked. Notice that the last coordinate of~\( x^0_t \) is
\begin{equation*}
  Z_t  + \tfrac{ 1 }{ 2 } \int_{ 0 < t_1 < t_2 \leq  t } {\left\langle  \dd X _{ t_1 }  \middle\vert \dd Y_{ t_2 }  \right\rangle}  - {\left\langle  \dd X_{ t_2 }  \middle\vert \dd Y_{ t_1 }  \right\rangle} .
\end{equation*}
The value for~\( x^{ 0 } _s \cdot  x^{ s } _t\) is given by
\begin{align}
  x^{ 0 } _s x^{ s } _t &=  \biggl( X_t , Y_t , Z_t + \tfrac{ 1 }{ 2 } \int_{ \mathrlap{0<t_1<t_2 \leq s} } \quad {\left\langle \dd X _{ t_1 }  \middle\vert \dd Y_{ t_2 } \right\rangle}  - {\left\langle  \dd X_{ t_2 }  \middle\vert \dd Y_{ t_1 }  \right\rangle} \nonumber\\
                        & \quad + \tfrac{ 1 }{ 2 } \int_{ \mathrlap{s<t_1<t_2 \leq t }} \quad  {\left\langle  \dd X _{ t_1 }  \middle\vert \dd Y_{ t_2 } \right\rangle}  - {\left\langle  \dd X_{ t_2 }  \middle\vert\dd  Y_{ t_1 }  \right\rangle}  + \tfrac{ 1 }{ 2 } {\left( {\left\langle  X_s \middle\vert Y_t-Y_s \right\rangle}  - {\left\langle  X_t - X_s \middle\vert Y_s \right\rangle}   \right)}  \biggr) \label{Heisenberg (1)}.
\end{align}
We can write 
\begin{align*}
    {\left\langle  X_s \middle\vert Y_t-Y_s \right\rangle}  - {\left\langle X_t - X_s \middle\vert Y_s \right\rangle}  & = {\left\langle  \int_{ 0 } ^{ s } \dd X_{ t_1 }  \middle\vert \int_{ s } ^{ t } \dd Y_{ t_2 }  \right\rangle}  - {\left\langle  \int_{ s } ^{ t } \dd X_{ t_2 }  \middle\vert \int_{ 0 } ^{ s } \dd Y_{ t_1 }  \right\rangle}  \\
                                                                                                                       & = \int_{ \mathrlap{0 < t_1 \leq  s < t_2 \leq  t} } \quad {\left\langle  \dd X_{ t_1 }  \middle\vert \dd Y_{ t_2 }\right\rangle}  - {\left\langle  \dd X_{ t_2 }  \middle\vert\dd  Y_{ t_1 }  \right\rangle}  .
\end{align*}
Thus, the right-hand side in (\ref{Heisenberg (1)}) contains a sum of integrals with identical integrands over disjoint sets whose union over these sets is exactly \( \{ (t_1,t_2) : 0 < t_1 < t_2 \leq t \} \). Hence, \( x^{ 0 } _s x^{ s } _t = x^{ 0 } _t \). Similarly, as one can see that \( ( x^{ s } _t) _{ s,t } \) inherits independence of the increments; if \( X,Y,a \) are L\'evy processes, then \( (x^{ s } _t)_{ s,t }  \) becomes a multiplicative process with stationary increments, i.e., a \emph{multiplicative L\'evy process}.

\end{document}